\newcommand*\bigcdot{\mathpalette\bigcdot@{.5}}
\newcommand*\bigcdot@[2]{\mathbin{\vcenter{\hbox{\scalebox{#2}{$\m@th#1\bullet$}}}}}
\numberwithin{equation}{section}
\newtheorem{theorem}{Theorem}[section]
\newtheorem{lemma}[theorem]{Lemma}
\newtheorem{proposition}[theorem]{Proposition}
\newtheorem{definition}[theorem]{Definition}
\newtheorem{remark}[theorem]{Remark}
\newtheorem{example}[theorem]{Example}
\def\XXint#1#2#3{{\setbox0=\hbox{$#1{#2#3}{\int}$ }
		\vcenter{\hbox{$#2#3$ }}\kern-.6\wd0}}
\newcommand{\bv}{\operatorname{BV}}
\newcommand{\di}{\operatorname{div}}
\newcommand{\dif}{\operatorname{d}\!}
\newcommand{\tr}{\operatorname{Tr}}
\newcommand{\R}{\mathbb{R}}
\newcommand{\B}{\mathbb{B}}
\newcommand{\C}{\mathbb{C}}
\newcommand{\locc}{\operatorname{loc}}
\newcommand{\wstar}{\stackrel{*}{\rightharpoonup}}
\newcommand{\sobo}{\operatorname{W}}
\newcommand{\lebe}{\operatorname{L}}
\newcommand{\hold}{\operatorname{C}}
\newcommand{\curl}{\operatorname{curl}}
\renewcommand{\leq}{\leqslant}
\newcommand{\imag}{\operatorname{i}}
\newcommand{\lin}{\operatorname{Lin}}
		\noindent\textit{E-mail address}, B.~Rai\c{t}\u{a}: \texttt{bogdan.raita@warwick.ac.uk}\par
		\noindent\textit{E-mail address}, A.~Skorobogatova: \texttt{a.skorobogatova@warwick.ac.uk}
\begin{document}
	\title[Continuity from canceling operators]{Continuity and canceling operators of order $n$ on $\R^n$}
	\author[B. Rai\c{t}\u{a}]{Bogdan Rai\c{t}\u{a}}
	\author[A. Skorobogatova]{Anna Skorobogatova}
	\subjclass[2010]{Primary: 26D10; Secondary: 46E35 }
	\keywords{Convolution operators, Canceling operators, Critical embeddings, Sobolev inequalities, Linear $\lebe^1$-estimates, Elliptic systems, Functions of bounded variation.}
	
	\begin{abstract}
		We prove that for elliptic and canceling linear differential operators $\B$ of order $n$ on $\R^n$, continuity of a map $u$ can be inferred from the fact that $\B u$ is a measure. We also prove strict continuity of the embedding of the space $\bv^\B(\R^n)$ of functions of bounded $\B$-variation into the space of continuous functions vanishing at infinity.
	\end{abstract}
	\maketitle
	\section{Introduction}
	It was recently proved by {Van Schaftingen} in \cite{VS}, building on, among others, \cite{BB04,BB07,BBM3,BVS,VS-1,VS1,VS0}, that the linear Sobolev-type $\lebe^1$-estimate 
	\begin{align}\label{eq:VS_j}
	\|D^{k-j}u\|_{\lebe^\frac{n}{n-j}}\leq c\|\B u\|_{\lebe^1}\quad\text{ for }u\in\hold^\infty_c(\R^n,V)\quad\left [j=1\ldots\min \{k,n-1\}\right]
	\end{align}
	holds if and only if the $k$-th order elliptic partial differential operator $\B$ satisfies a then newly introduced \emph{canceling} condition (see Section~\ref{sec:prel} for notation and terminology). Of course, all estimates in \eqref{eq:VS_j} would follow from the estimate for $j=0$ by an iteration of the Sobolev inequality. However, it is well-known that linear Calder\'on-Zygmund theory fails in $\lebe^1$ in general. In particular, estimates of the type \eqref{eq:VS_j} with $j=0$ can only hold in trivial instances \cite{Ornstein,KK}. This paper is concerned with the other limiting case of \eqref{eq:VS_j}, namely $j=n$, provided that $k\geq n$. We make the convention $n/0=\infty$, and will in fact assume that $k=n$ for simplicity.
	
	Henceforth, we assume that $\B$ is an $n$-th order elliptic operator on $\R^n$. In this case, the estimate \eqref{eq:VS_j} with $j=n$ cannot be inferred from $\eqref{eq:VS_j}$ with $j=n-1$, since $\dot{\sobo}{^{1,n}}\not\hookrightarrow\lebe^\infty$. However, it is not difficult to prove that for $\B=D^n$, we have
	\begin{align}\label{eq:Linfty}
	\|u\|_{\lebe^\infty}\leq c\|\B u\|_{\lebe^1}\quad\text{ for } u\in\hold^\infty_c(\R^n).
	\end{align}
	In more generality, it was proved by {Bousquet} and {Van Schaftingen} in \cite[Thm.~1.3]{BVS} that the estimate \eqref{eq:Linfty} holds for canceling operators, while simple one-dimensional examples show that the canceling condition might not be necessary. Indeed, it was shown in \cite[Thm.~1.3]{Rdiff} that the inequality \eqref{eq:Linfty} is equivalent to a \emph{weakly canceling} condition. We briefly describe the difference between the two conditions from an analytic perspective: On one hand, $\B$ is canceling if and only if the equation $\B u=\delta_0 w$ for vectors $w$ has solutions only for $w=0$ \cite[Lem.~2.5]{Rdiff}. On the other hand, $\B$ is weakly canceling if and only if solutions of $\B u=\delta_0 w$ are locally bounded \cite[Lem.~4.3]{Rdiff}.
	
	Using the estimates \eqref{eq:VS_j} and \eqref{eq:Linfty}, one can investigate more general partial differential equations, e.g., variational problems of linear growth \cite{ARDPR,BDG}. From the point of view of weak formulations (e.g., of the Euler-Lagrange equations), it is natural to investigate the spaces
	\begin{align}\label{eq:def_sob}
	\begin{split}	\sobo^{\B,1}(\Omega)&\coloneqq\{u\in\sobo^{n-1,1}(\Omega,V)\colon \B u\in\lebe^1(\Omega,W)\},\\
	\bv^{\B}(\Omega)&\coloneqq\{u\in\sobo^{n-1,1}(\Omega,V)\colon \B u\in\mathcal{M}(\Omega,W)\}.
	\end{split}
	\end{align}
	We equip the space $\bv^\B$ (hence also $\sobo^{\B,1}$) with the complete norm
	\begin{align}\label{eq:def_norm}
	\|u\|_{\bv^\B(\Omega)} \coloneqq |\B u|(\Omega) + \sum_{|\alpha| < n}\|\partial^{\alpha}u\|_{\lebe^1(\Omega,V)}.
	\end{align}
	The emergence of the $\bv^\B$-space is natural since, from a typical $\lebe^1$-bound on a minimizing/approximating sequence, one cannot infer weak $\lebe^1$-compactness, but can infer weakly-$*$ compactness in the space of measures. Motivated by these facts, several analytical properties of $\sobo^{\B,1}$ and $\bv^\B$ were already studied in \cite{BDG,GR,Rdiff}.
	
	Starting from the estimate \eqref{eq:Linfty}, one can infer by norm-convergence of mollifications that the inclusion $\sobo^{\B,1}(\R^n)\subset\lebe^\infty(\R^n,V)$ holds (Lemma~\ref{lem:WB1_density}), is norm-continuous (Lemma~\ref{lem:WB1_ctnity}), and, more importantly, directly implies that $\sobo^{\B,1}(\R^n)\subset\hold(\R^n,V)$. The situation is quite different for $\bv^\B(\R^n)$. Indeed, we still have that \eqref{eq:Linfty} implies $\bv^\B(\R^n)\subset\lebe^\infty(\R^n,V)$, which follows by \emph{strict-density} of mollifications, convergence that is not continuous with respect to addition of measures. Interestingly, this pathology turns out to be phenomenological: one can show that cancellation is necessary for continuity of $\bv^\B$-maps to hold (Proposition~\ref{prop:nec_canc}). The purpose of of the present paper is to investigate the sufficiency of cancellation for continuity of maps in $\bv^\B(\R^n)$ (recall that we assumed that $\B$ is elliptic of order $n$).
	
	We begin with a simple example: $\sobo^{n,1}(\R^n)$ versus $\bv^n(\R^n)$, i.e., $\B=D^n$, which is elliptic for all $n\geq 1$, weakly canceling but not canceling if $n=1$ and canceling otherwise. It is well-known that in this limiting case, we have that $\sobo^{n,1}(\R^n)\hookrightarrow\hold_0(\R^n)$. In the case of $\bv$, we have that $\bv(\R)\subset\lebe^\infty(\R)$, but indicator functions of bounded intervals clearly lie in $\bv$. On the other hand, if $n\geq2$, it was shown by {Tartar} and revisited in \cite{PVS} that maps in $\bv^n(\R^n)$ are continuous (see also \cite[Thm.~1(iii)]{Dorronsoro}).
	
	Another relevant example is given by $\B=\Delta\circ(\di, \curl)$ on $\R^3$, which is elliptic, weakly canceling, but not canceling. In this case:
	\begin{enumerate}
		\item $\sobo^{\B,1}(\R^3)\hookrightarrow\hold_0(\R^3,\R^3)$ with norm-continuity,
		\item $\bv^{\B}(\R^3)\subset\lebe^\infty(\R^3,\R^3)$,
		\item $\bv^{\B}(\R^3)\not\subset\hold(\R^3,\R^3)$.
	\end{enumerate}
	The assertions above follow from \eqref{eq:Linfty}, the results in Section~\ref{sec:prel_strict}, and Proposition~\ref{prop:nec_canc}. In more generality, we have:
	\begin{theorem}\label{thm:main}
		Let $\B$ be an $n$-th order elliptic operator on $\R^n$. Then maps in $\bv^\B_{\locc}(\R^n)$ are continuous if and only if $\B$ is canceling.
	\end{theorem}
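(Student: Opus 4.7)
The direction ``continuity $\Rightarrow$ canceling'' is Proposition~\ref{prop:nec_canc}, so we focus on the converse. Assume $\B$ is canceling and $u \in \bv^{\B}_{\locc}(\R^n)$; the aim is a continuous representative on every open ball $B \subset \R^n$. A standard localization argument reduces to the case of compactly supported $u$ with compactly supported $\mu := \B u \in \mathcal{M}(\R^n, W)$: for $\eta \in \ccinfty(\R^n)$ equal to $1$ on $B$, the commutator $[\B, \eta]\, u$ is a linear differential operator of order at most $n-1$ with smooth compactly supported coefficients acting on $u$, and hence lies in $\lebe^{1}(\R^n, W)$ by the inclusion $u \in \sobo^{n-1,1}_{\locc}$. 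Consequently $\B(\eta u) = \eta\, \B u + [\B, \eta]\, u$ is a compactly supported Radon measure and $\eta u \in \bv^{\B}(\R^n)$; continuity of $\eta u$ on $B$ coincides with continuity of $u$ there.

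The main idea is to represent $u$ as $u = K \ast \mu + P$, where $P$ is a polynomial of degree less than $n$ and $K$ is a parametrix for $\B$ that is \emph{continuous} (hence bounded) on all of $\R^n$. Ellipticity yields the pseudo-inverse symbol $\B[\xi]^{\dagger} := (\B[\xi]^{*}\B[\xi])^{-1} \B[\xi]^{*}$, smooth and $(-n)$-homogeneous on $\R^n \zeroset$. Its inverse Fourier transform $K$ is smooth on $\R^n \zeroset$ but, because the symbol sits at the borderline degree for Fourier inversion of a homogeneous symbol, near the origin $K$ combines a $\log|x|$-singularity (with constant coefficient proportional to the spherical mean of $\B[\xi]^{\dagger}$) with a degree-$0$ homogeneous contribution. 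The main step---and the principal obstacle---is to use the canceling hypothesis to absorb both of these obstructions into the polynomial ambiguity of the parametrix, thereby extracting a representative $K \in \hold(\R^n; \mathcal{L}(W, V)) \cap \lebe^{\infty}$. We would carry this out via the analytic reformulation of cancellation from \cite[Lem.~2.5]{Rdiff}, coupled with the boundedness of the kernel already forced by the Bousquet--Van~Schaftingen $\lebe^{\infty}$-estimate \eqref{eq:Linfty}; the weakly canceling example $\Delta \circ (\di, \curl)$ on $\R^{3}$ from the introduction shows that full cancellation, rather than mere weak cancellation, is what is required at this point.

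Granted such a $K$, the argument concludes by dominated convergence: for any sequence $x_k \to x$ in $\R^n$, $K(x_k - y) \to K(x - y)$ pointwise in $y$ with uniform dominator $\|K\|_{\lebe^{\infty}}$, and $|\mu|$ is a finite measure, so $K \ast \mu$ is continuous on $\R^n$. Finally, $u - K \ast \mu$ is a tempered distribution annihilated by $\B$, and by ellipticity of $\B$ it is therefore a polynomial, which is in particular continuous. Hence $u$ is continuous on $\R^n$, completing the sufficiency direction.
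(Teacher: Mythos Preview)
Your sufficiency argument has a genuine gap at the step you yourself flag as the principal obstacle: the kernel $K=K_n$ cannot in general be chosen in $\hold(\R^n)\cap\lebe^\infty$, and ``polynomial ambiguity'' cannot repair this. The parametrix depends only on $\B$, and by Lemma~\ref{lem:conv} it has the fixed form $K_n=H_0+\log|\cdot|\,\mathbf{L}$ with $H_0$ smooth and $0$-homogeneous and $\mathbf{L}$ a constant linear map. Adding a polynomial to $K_n$ is smooth at the origin and removes neither the jump of a non-constant $H_0$ at $0$ nor the $\log$ blow-up. Cancellation does \emph{not} force $\mathbf{L}=0$: for the canceling operator $\B=D^2$ on $\R^2$ one has $\B^\dagger(\xi)w=|\xi|^{-4}\xi^\top w\,\xi$, so $\mathbf{L}w=\int_{\mathbb{S}^1}\xi^\top w\,\xi\,\dif\mathscr{H}^1(\xi)$, which equals $2\pi$ on $w=\mathrm{Id}$. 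Nor does \eqref{eq:Linfty} force $K_n\in\lebe^\infty$: it only gives $\|K_n\star g\|_{\lebe^\infty}\leq c\|g\|_{\lebe^1}$ for $g$ in the constrained range $\{\B u:u\in\hold^\infty_c\}$, not for arbitrary $g\in\lebe^1(\R^n,W)$. Consequently the dominated-convergence step for $K\star\mu$ fails because the dominator $\|K\|_{\lebe^\infty}$ need not exist.

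What cancellation actually buys is a constraint on the \emph{measure} $\mu=\B u$, not on the kernel: $\B u$ is non-atomic (see \cite{ARDPHR} or \cite[Thm.~1.4, Prop.~4.2]{VS}). The paper exploits this through an oscillation estimate that bypasses kernel regularity entirely: applying \eqref{eq:Linfty} to a cut-off of $u-(u)_{x,r}$, then the Leibniz rule, Poincar\'e's inequality, and H\"older's inequality yields
\begin{align*}
\|u-(u)_{x,r}\|_{\lebe^\infty(B_r(x))}\leq c\left(|\B u|(B_{2r}(x))+\sum_{j=1}^{n-1}\|D^{n-j}u\|_{\lebe^{n/(n-j)}(B_{2r}(x))}\right),
\end{align*}
and the right-hand side tends to $0$ as $r\downarrow0$ because $|\B u|(\{x\})=0$ and $D^{n-j}u\in\lebe^{n/(n-j)}_{\locc}$ by \eqref{eq:VS_j}. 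If you wish to salvage the convolution route, you must argue separately that $H_0\star\mu$ and $\log|\cdot|\star(\mathbf{L}\mu)$ are continuous; the former does follow from dominated convergence once $\mu$ is non-atomic (since $H_0$ is bounded and continuous off the origin), but the latter requires genuinely more work, as $\log$ is unbounded and mere non-atomicity of $\mu$ does not guarantee continuity of its logarithmic potential.
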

	In fact, it is not too difficult to prove that the inclusion $\bv^\B_{\locc}(\R^n)\subset \hold(\R^n,V)$ is bounded (in the sense of linear operators) if we consider the locally-uniform topology on $\hold(\R^n, V)$ (see Proposition~\ref{prop:GR}). However, as alluded to already, the natural topology to use on $\bv^\B$ is the strict topology, which we will now define.	In accordance with the better studied case of $\bv$ \cite{AFP_book}, we say that $u_j\rightarrow u$ \emph{$\B$-strictly} if and only if $u_j\rightarrow u$ in $\sobo{^{n-1,1}}$  and $\|\B u_j\|_{\mathcal{M}}\rightarrow \|\B u\|_\mathcal{M}$.
	
	Our second main result is to show that, indeed, the inclusion of Theorem~\ref{thm:main} is continuous with respect to strict convergence. Below, we write $\hold_0(\R^n)$ for the space of continuous functions vanishing at infinity which is a Banach space with the supremum norm.
	\begin{theorem}\label{thm:strict}
		Let $\B$ an $n$-th order, elliptic, and canceling operator on $\R^n$. Then the inclusion $\bv^\B(\R^n)\subset \hold_0(\R^n,V)$ is $\B$-strictly continuous.
	\end{theorem}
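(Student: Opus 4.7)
The proof naturally splits into establishing the set-theoretic inclusion $\bv^\B(\R^n)\subseteq\hold_0(\R^n,V)$ and then upgrading the norm-continuous embedding (Proposition~\ref{prop:GR} combined with Theorem~\ref{thm:main}) to strict continuity. For the inclusion, continuity of representatives is provided by Theorem~\ref{thm:main}, the bound $\|u\|_{\lebe^\infty}\leq C\|u\|_{\bv^\B(\R^n)}$ extends \eqref{eq:Linfty} through the strict-density results of Section~\ref{sec:prel_strict}, and vanishing at infinity follows from $u\in\sobo^{n-1,1}(\R^n,V)$ together with uniform continuity.

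For the strict continuity claim, suppose $u_j\to u$ $\B$-strictly. Writing $v^\varepsilon:=v*\rho_\varepsilon$ for mollification by a standard family $\rho_\varepsilon$, I would decompose
\[
u_j-u = (u_j - u_j^\varepsilon) + (u_j^\varepsilon - u^\varepsilon) - (u - u^\varepsilon).
\]
For each fixed $\varepsilon>0$, the middle term satisfies $\|u_j^\varepsilon - u^\varepsilon\|_{\lebe^\infty}\to 0$ as $j\to\infty$ because $u_j\to u$ in $\lebe^1$, and $\|u-u^\varepsilon\|_{\lebe^\infty}\to 0$ as $\varepsilon\to 0$ because $u\in\hold_0$ is uniformly continuous. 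The problem therefore reduces to
\[
\lim_{\varepsilon\to 0}\sup_{j}\|u_j - u_j^\varepsilon\|_{\lebe^\infty}=0,
\]
which is equivalent to equi-uniform continuity of the family $\{u_j\}$.

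To obtain this uniform modulus of continuity, I would invoke the parametrix representation
\[
u(x)=\int_{\R^n} K(x-y)\,\dif\B u(y)
\]
provided by ellipticity and cancellation of $\B$, and estimate
\[
u_j(x+h)-u_j(x)=\int_{\R^n}\bigl(K(x+h-y)-K(x-y)\bigr)\,\dif\B u_j(y).
\]
Away from the diagonal $y=x$, the degree-zero kernel $K$ has a classical modulus of continuity; near the diagonal, cancellation forces the otherwise $O(1)$ jump to integrate out, yielding a modulus that depends on $\B u_j$ only through $|\B u_j|(\R^n)$ and the tail distribution of $|\B u_j|$. Strict convergence of the total variations combined with weak-$*$ convergence of $\B u_j$ to $\B u$ gives uniform tightness of $\{|\B u_j|\}$, so the modulus is $j$-uniform. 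Equi-uniform continuity combined with $\lebe^1$-tightness of $\{u_j\}$ (inherited from $\sobo^{n-1,1}$-convergence) then yields uniform decay at infinity and closes the argument.

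The chief obstacle is producing the uniform modulus of continuity from the canceling structure: the degree-zero kernel $K$ is bounded but discontinuous at the origin, so a naive estimate would blow up whenever $\B u_j$ concentrates near a moving point. It is precisely the canceling condition, expressed as an angular integral condition on $K$, that neutralizes this singularity and delivers the $j$-uniform modulus; without it, atomic concentrations in $\B u_j$ would produce order-one jumps in $u_j$, defeating strict continuity in agreement with Proposition~\ref{prop:nec_canc}.
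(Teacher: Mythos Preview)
Your high-level strategy---reduce strict continuity to a uniform modulus of continuity for the family $\{u_j\}$ together with decay at infinity---matches the paper's, and the mollification decomposition you give is a clean way to make that reduction. The paper instead uses Arzel\`a--Ascoli on a large cube and handles the complement via a cut-off, but this is cosmetic.

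The substantive divergence, and the place where your sketch has a genuine gap, is in how the uniform modulus of continuity is obtained. You propose to read it off directly from the parametrix $u=K\star\B u$ and the structure of the ``degree-zero'' kernel $K$. Two points:
\begin{enumerate}
\item The kernel is not purely $0$-homogeneous: by Lemma~\ref{lem:conv} one has $K_n=H_0+\log|\cdot|\,\mathbf L$, and the logarithmic part does not disappear under the canceling hypothesis (cancellation is $\mathcal I=\{0\}$; it is \emph{weak} cancellation that is the angular-integral condition $\mathbf L(\mathcal I)=\{0\}$). So the phrase ``cancellation, expressed as an angular integral condition on $K$, neutralizes this singularity'' conflates the two notions and does not address the $\log$-term.
\item Even for the $H_0$-part, the near-diagonal contribution to $u_j(x+h)-u_j(x)$ is controlled by $|\B u_j|(B(x,c|h|))$, and what you need is that this quantity tends to zero \emph{uniformly in $j$ and in $x$} as $|h|\to0$. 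Tightness of $\{|\B u_j|\}$ (no mass at infinity) is the wrong tool here; the relevant fact is uniform non-concentration on small balls. The paper isolates this as a separate measure-theoretic statement (Lemma~\ref{lem:my_first_MT_lemma}): if non-atomic measures converge strictly, then their masses on small cubes are uniformly small. Non-atomicity of each $\B u_j$ and of the limit $\B u$ is where cancellation actually enters (via the dimensional estimates of \cite{ARDPHR}), not through any angular identity for $K$.
\end{enumerate}

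The paper sidesteps the kernel analysis entirely. It derives the local oscillation bound \eqref{eq:eqc},
\[
|u_j(x)-u_j(y)|\leq c\Bigl(|\B u_j|(Q_{2r})+\sum_{l=1}^{n-1}\|D^{n-l}u_j\|_{\lebe^{n/(n-l)}(Q_{2r})}\Bigr),
\]
from the $\lebe^\infty$-estimate \eqref{eq:Linfty}, a cut-off, Leibniz, Poincar\'e, and H\"older. The measure term is then made uniformly small by Lemma~\ref{lem:my_first_MT_lemma}, while the intermediate-derivative terms are made uniformly small by the $\tfrac{n}{n-l}$-uniform integrability coming from the strictly continuous subcritical embeddings of Lemma~\ref{lem:BVB_strict} (proved via concentration-compactness). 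Your proposal would need to recover both ingredients---the small-ball lemma and some replacement for the subcritical uniform integrability---before the kernel route can close; as written it supplies neither.
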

	In fact, we can say slightly more: even if we only have $\bv^\B\subset\lebe^\infty$ (i.e., $\B$ is weakly canceling), we still have that the inclusion is locally strictly continuous if and only if $\B$ is canceling (see Lemma~\ref{lem:nec_strict}).
	
	The proof of strict continuity is realized in two steps. Firstly, we use the concentration compactness principle of \cite{PL^2} to prove that the intermediate embeddings $\bv^\B(\R^n)\subset\sobo{^{n-j,n/(n-j)}}$ for integers $0<j<n$ are strictly continuous.
	Secondly, we employ a fairly elementary measure theoretic argument, using the Arzela-Ascoli theorem and an estimate on the modulus of continuity of $\bv^\B$ maps obtained in the proof of Theorem~\ref{thm:main}. This is in contrast with with the approach of the upcoming paper \cite{GRVS}, where strict continuity of embeddings of $\bv^\B(\R^n)$ on lower dimensional subsets of $\R^n$ is discussed. There, the approach to strict continuity relies on proving a multiplicative trace inequality for elliptic and canceling operators in the spirit of \cite[Sec.~1.4.7]{Mazya}. In our case, it does not seem possible to employ an interpolation inequality, particularly since, for instance, the embedding $\dot{\sobo}{^{s,n/s}}(\R^n)\hookrightarrow\lebe^\infty$ holds only for $s=n$ in the range $s\leq n$. For more detail on related multiplicative inequalities in Sobolev spaces, see the recent the work \cite{BM} and the references therein.
	
	Of course, possible applications may require variants of Theorem~\ref{thm:main} on bounded domains. Theorem~\ref{thm:main} only guarantees that for canceling operators $\B$, we have that $\bv^\B(\Omega)\subset\hold(\Omega,V)$ whenever $\Omega\subset\R^n$ is open. However, this statement does not, in any way, imply continuity up to the boundary. In particular, it is not too difficult to infer from the results in \cite{GR}, which concern integrability near the boundary, that for the inclusion $\bv^\B(\Omega)\subset\lebe^\infty$ to hold, the operator $\B$ needs to be $\C$-elliptic, i.e., $\ker_\C\B(\xi)=\{0\}$ for any $0\neq\xi\in\C^n$ (see Lemma~\ref{lem:nec_C-ell}). It was also shown in \cite{GR} that the $\C$-ellipticity condition is \emph{strictly} stronger than cancellation in the class of elliptic operators. Using our method for proving Theorem~\ref{thm:main}, one can  prove the following:
	\begin{theorem}\label{thm:main2}
		Let $\B$ be an $n$-th order elliptic operator on $\R^n$ and $Q\subset\R^n$ be an open cube. Then  $\bv^\B(Q)\subset\hold(\bar Q,V)$ if and only if $\B$ is $\C$-elliptic. Moreover, if $\B$ is $\C$-elliptic, then there exists a bounded, linear trace operator $$\tr\colon\bv^\B(Q)\rightarrow\hold(\partial Q,V).$$
	\end{theorem}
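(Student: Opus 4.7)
For the necessity of $\C$-ellipticity, I would assume $\bv^\B(Q)\subset\hold(\bar Q,V)$. Since $\hold(\bar Q,V)$ embeds continuously into $\lebe^\infty(Q,V)$ via the supremum norm, the induced linear map from $(\bv^\B(Q),\|\cdot\|_{\bv^\B})$ into $\lebe^\infty(Q,V)$ has closed graph (norm convergence in $\bv^\B$ yields $\lebe^1$-convergence, while uniform convergence of continuous representatives identifies the candidate limit pointwise a.e.), hence is bounded by the closed graph theorem. Lemma~\ref{lem:nec_C-ell} then forces $\B$ to be $\C$-elliptic.

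For sufficiency, assume $\B$ is $\C$-elliptic and invoke two facts from \cite{GR}: within the class of elliptic operators of order $n$, $\C$-ellipticity strictly implies cancellation, so Theorems~\ref{thm:main} and \ref{thm:strict} apply to $\B$; and $\C$-ellipticity supplies a bounded linear extension operator $E\colon\bv^\B(Q)\to\bv^\B(\R^n)$. Given $u\in\bv^\B(Q)$, I would set $\tilde u:=Eu\in\bv^\B(\R^n)$ and apply Theorem~\ref{thm:strict} to obtain a continuous representative $\tilde u\in\hold_0(\R^n,V)$. Its restriction to $\bar Q$ is then a continuous representative of $u$ on $\bar Q$, proving $\bv^\B(Q)\subset\hold(\bar Q,V)$.

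I would then define the trace by $\tr u:=\tilde u|_{\partial Q}$. Linearity is inherited from $E$, and boundedness reads
\begin{align*}
\|\tr u\|_{\hold(\partial Q,V)}\leq\|\tilde u\|_{\lebe^\infty(\R^n,V)}\leq c\,\|\tilde u\|_{\bv^\B(\R^n)}\leq c\,\|E\|\,\|u\|_{\bv^\B(Q)},
\end{align*}
via the bounded inclusion $\bv^\B(\R^n)\hookrightarrow\lebe^\infty(\R^n,V)$ coming from \eqref{eq:Linfty} and strict density of mollifications. Well-definedness of $\tr$ is independent of the choice of extension: any $v\in\bv^\B(\R^n)$ vanishing a.e.\ on $Q$ has, by Theorem~\ref{thm:main}, a continuous representative on $\R^n$; since Lebesgue points of $v$ inside $Q$ are dense in $\bar Q$ and $v$ vanishes on them, the representative vanishes on all of $\bar Q$ by continuity, and in particular on $\partial Q$.

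The principal obstacle is the existence of the extension operator $E$, which is only available under $\C$-ellipticity; its construction in \cite{GR} is the deep technical ingredient powering the argument. An intrinsic alternative, matching the phrase \emph{using our method for proving Theorem~\ref{thm:main}}, would consist in extending that proof's modulus-of-continuity estimate uniformly up to $\partial Q$. The difficulty there is that the representation of $u$ via the fundamental solution of $\B$ acquires boundary contributions on $\partial Q$; these can be controlled exactly by exploiting $\C$-ellipticity, producing a uniform modulus of continuity on $\bar Q$ and thereby yielding both the embedding and the trace by the Arzel\`a--Ascoli-type reasoning sketched after Theorem~\ref{thm:strict}.
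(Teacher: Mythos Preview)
Your main approach has a genuine gap: the extension operator $E\colon\bv^\B(Q)\rightarrow\bv^\B(\R^n)$ that you rely on is not known to exist. The paper says this explicitly in the paragraph preceding Proposition~\ref{prop:suff_C-ell}: the extension result of \cite[Thm.~1.2]{GR} is, for higher order operators, only available for $\sobo^{\B,1}(Q)$ and \emph{not} for $\bv^\B(Q)$. Your chain of inequalities for the trace, and your proof that the continuous representative extends to $\bar Q$, both collapse without $E$. (Your closed-graph detour for necessity is harmless but superfluous: Lemma~\ref{lem:nec_C-ell} already assumes only the set-theoretic inclusion and produces an explicit $\bv^\B$-map that is discontinuous at a boundary point.)

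The ``intrinsic alternative'' you sketch at the end is closer to what the paper actually does, but the mechanism is not a fundamental-solution representation with boundary terms. Instead, the paper exploits that for $\C$-elliptic $\B$ one has the \emph{interior-plus-boundary} estimate of Proposition~\ref{prop:GR} directly on small cubes $Q_r(x_0)\cap Q$ (which, for $x_0\in\partial Q$, are rectangles of finitely many fixed shapes up to homothety). A scaling argument, Poincar\'e, and H\"older then give
\[
\|u-(u)_{x_0,r}\|_{\lebe^\infty(Q_r(x_0))}\leq c\left(|\B u|(Q_r(x_0))+\|Du\|_{\lebe^n(Q_r(x_0))}\right),
\]
and since $\bv^\B(Q)\subset\sobo^{1,n}(Q,V)$ for $\C$-elliptic $\B$ (again from \cite{GR}), the right-hand side vanishes as $r\downarrow0$ uniformly in $x_0\in\bar Q$. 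The trace is then simply $\tr u=u\restriction_{\partial Q}$, bounded via Proposition~\ref{prop:GR} applied once on all of $Q$; no extension or Arzel\`a--Ascoli argument is needed.
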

	More general domains are also possible, but we chose to present Theorem~\ref{thm:main2} in this form for better comparison with the only other similar result (apart from $\bv^n$) that we are aware of, namely \cite[Thm.~1.4]{PVS}. There, it is shown that if one takes $\B=\partial_1\partial_2\ldots\partial_n$, the statement of the Theorem~\ref{thm:main} holds for $Q$ chosen as the unit cube. This shows that ellipticity need not be necessary for a result of this type; however, it is not clear whether one can cover more than special domains for non-elliptic operators (see also the remarks following \cite[Thm.~1.5]{PVS}).
	
	A similar theme between the mechanisms used to prove continuity in \cite{PVS} and our results is that lack of concentration of $\B u$ on a class $\mathcal{S}$ of sets implies continuity of $u$. In the case of \cite{PVS}, $\mathcal{S}$ consists of the hyperplanes of co-dimension $1$ orthogonal to the coordinate axes, whereas in our case, $\mathcal{S}$ is the class of singletons. In fact, our approach generalizes the sketch in the case $\B=D^n$ given after \cite[Thm.~1.2]{PVS}.
	
	This paper is organized as follows: In Section~\ref{sec:prel} we collect the basic notation and background results on partial differential operators and strict-density and strict-continuity in $\bv^\B$. In Section~\ref{sec:canc} we prove Theorems~\ref{thm:main} and \ref{thm:strict} which concern canceling operators. In Section~\ref{sec:C-ell} we prove the boundary continuity Theorem~\ref{thm:main2} for $\C$-elliptic operators.
	{\subsection*{Acknowledgement} The authors are thankful to J. Kristensen for suggesting the problem and to F. Gmeinder and F. Rindler for helpful discussion around the topic of the paper.
	This project has received funding
	from the European Research Council (ERC) under the European Union's Horizon	2020 research and innovation programme under grant agreement No 757254 (SINGULARITY).}
	
	\section{Preliminaries}\label{sec:prel}
	\subsection{Basic notation}
	Throughout this paper, the underlying space for all functions and measures is denoted by $\R^n$. For most purposes, one can assume that $n>1$. We will write $\Omega\subset\R^n$ for a typical domain, which will often be assumed open, bounded, and having Lipschitz boundary.
	
	We will use standard notation for the $\lebe^p$ and Sobolev spaces, $\sobo^{k,p}$. We will also use the homogeneous Sobolev space $\dot{\sobo}{^{k,p}}(\R^n)$, which is defined as the closure of $\hold_c^\infty(\R^n)$ in the (semi-)norm $u\mapsto\|D^k u\|_{\lebe^p(\R^n)}$. Here, $\hold^\infty_c$ denotes the space of compactly supported, smooth functions. We will also work with the space $\hold(\Omega)$ of continuous functions, which is naturally endowed with the topology of locally uniform convergence. As a particularly important subspace, we single out $\hold_0(\R^n)$, the space of continuous functions vanishing at infinity, which is the uniform closure of $\hold^\infty_c(\R^n)$, hence a Banach space. We will denote by $\mathscr{S}(\R^n)$ the Schwarz space of rapidly decreasing functions, by $\mathscr{S}^\prime(\R^n)$ its linear topological dual, the space of tempered distributions, and by $\mathscr{D}^\prime(\R^n)$ the space of distributions.
	
	The Lebesgue-$n$ and Hausdorff-$\alpha$ measures will be denoted by $\mathscr{L}^n$ and $\mathscr{H}^\alpha$. In more generality, we will work with vectorial finite measures $\mu\in\mathcal{M}(\Omega,W)$, where $W$ is a finite dimensional normed vector space. Here, finite means that the total variation norm $\|\mu\|_{\mathcal{M}(\Omega)}=|\mu|(\Omega)$ is finite, where $|\mu|$ denotes the total variation measure of $\mu$. We also briefly recall the Riesz representation theorem, which states that $\mathcal{M}(\Omega)$ is the linear dual of $\hold_0(\Omega)$. This identification enables us to characterize weakly-* convergence of measures. For more details on elementary measure theory, we refer the reader to \cite{AFP_book}.
	
	We write $B(x,r)=B_r(x)\subset\R^n$ for the ball of radius $r$, centered at $x$, and by $Q(x,r)=Q_r(x)$ for the cube of side length $2r$, centered at $x$. If $x$ lies in an ambient domain $\Omega$, the notation $B_r(x)$ will actually denote $B_r(x)\cap\Omega$, with an analogous convention for $Q_r(x)$. We will denote the average of an integrable function $f$ on $\Omega$ (always taken with respect to $\mathscr{L}^n$) by $(f)_\Omega=\fint_\Omega f\dif x=\mathscr{L}{^n}(\Omega){^{-1}}\int_\Omega f\dif x$. In the case when $\Omega$ itself is a ball or cube of radius $r$ around $x$, we will indiscriminately use the notation $(f)_{x,r}$.
	
	We use the notation $c$ to denote a general constant $c>0$ which does not depend on any of the quantities that vary in a line of estimation. The value of $c$ can, of course, vary from line to line.
	\subsection{Partial differential operators}\label{sec:prel_pdos}
		We will consider $k$-homogeneous linear differential operators $\B$ on $\R^n$ from $V$ to $W$, with constant coefficients:
	\begin{align}\label{eq:B}
	\B u\coloneqq \sum_{|\beta|=k}B_\beta \partial^\beta u\quad\text{ for }u\in\mathscr{D}^\prime (\R^n,V).
	\end{align}
	Here $B_\beta\in\lin(V,W)$ for all multi-indices $\beta\in\mathbb{N}^n_0$ with $|\beta|=n$ and $V,\,W$ are finite dimensional inner product spaces. We also consider the symbol map 
	\begin{align*}
	\B (\xi)\coloneqq \sum_{|\beta|=k}\xi^\beta B_\beta\in\lin(V,W)  \quad\text{ for }\xi\in\R^n,
	\end{align*}
	which is defined such that $\widehat{\B u}(\xi)=(-\imag)^k\B(\xi)\hat u(\xi)$ for $\xi\in\R^n$, $u\in\mathscr{S}(\R^n,V)$. Our convention for the Fourier transform is
	\begin{align*}
	[\mathscr{F}u](\xi)=\hat{u}(\xi)\coloneqq\int_{\R^n}u(x)\mathrm{e}^{-\imag x\cdot\xi}\dif x,
	\end{align*}
	defined for $u\in\mathscr{S}(\R^n)$ and extended by duality to tempered distributions.
	
	We say that an operator $\B$ is \emph{elliptic} if and only if $\ker_\R\B(\xi)=\{0\}$ for all $0\neq\xi\in\R^n$. Under this assumption, one has that the tensor-valued $(-k)$-homogeneous function defined by $\B^\dagger(\xi)\coloneqq\left[\B^*(\xi)\B(\xi)\right]^{-1}\B^*(\xi)$ for $\xi\neq0$ and $\B^\dagger(0)=0$ is a left-inverse of $\B(\xi)$. In particular, one has for $u\in\mathscr{S}(\R^n,V)$ and $j=0,\ldots ,k$ that
	\begin{align*}
	\widehat{D^{k-j}u}(\xi)=\imag^{k-j}\B^\dagger(\xi)\widehat{\B u}(\xi)\otimes\xi^{\otimes(k-j)}\eqqcolon m_{-j}(\xi)\widehat{\B u}(\xi)\quad\text{ for }0\neq\xi\in\R^n,
	\end{align*}
	where $m_{-j}\in\hold^\infty(\R^n\setminus\{0\},\imag^{k-j}\lin(W,V\odot^{k-j}\R^n))$ is a $(-j)$-homogeneous map. Here $V\odot^l\R^n$ denotes the space of $V$-valued, symmetric $l$-linear maps on $\R^n$. In particular, $m_{-j}$ is a tempered distribution with reasonably well-understood inverse Fourier transform (see \cite[Lem.~2.1]{BVS}, \cite[Eq.~(3,1)]{Rdiff}, and the references therein):
	\begin{lemma}\label{lem:conv}
		Let $\B$ be an elliptic operator as in \eqref{eq:B}. Then there exist convolution kernels $K_j\in\hold^\infty(\R^n\setminus\{0\},\lin(W,V\odot^{k-j}\R^n))$, $j=0,\ldots,\min\{k,n\}$, such that for all $u\in\hold^\infty_c(\R^n,V)$ we have that $D^{k-j}u=K_j\star \B u$, where
		\begin{align*}
		K_j=H_{j-n}\quad&\text{ if }j=0,\ldots,\min\{k,n-1\},\\
		K_n=H_0+\log|\cdot|\mathbf{L}\quad&\text{ if }j=n\leq k.
		\end{align*}
		Here $H_{l}$ is $l$-homogeneous and $\mathbf{L}\in\lin(W,V\odot^{k-n}\R^n)$ is defined by
		\begin{align}\label{eq:L_wcanc}
		\mathbf{L}w\coloneqq\int_{\mathbb{S}^{n-1}}\B^\dagger(\xi)w\otimes\xi^{\otimes(k-n)}\dif\mathscr{H}^{n-1}(\xi)\quad\text{ for }w\in W.
		\end{align}
	\end{lemma}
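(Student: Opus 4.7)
The plan is to compute $\mathscr{F}^{-1} m_{-j}$ using the standard theory of Fourier transforms of homogeneous distributions. By ellipticity, $\B^\dagger$ is smooth on $\R^n\setminus\{0\}$ and $(-k)$-homogeneous, hence the symbol $m_{-j}(\xi) = \imag^{k-j}\B^\dagger(\xi)(\cdot)\otimes\xi^{\otimes(k-j)}$ is smooth on $\R^n\setminus\{0\}$ and $(-j)$-homogeneous. The identity $D^{k-j}u = K_j\star\B u$ is equivalent, after Fourier transform, to $K_j = \mathscr{F}^{-1}m_{-j}$, with the convolution understood in the distributional sense and then verified on $\hold^\infty_c$ by density.

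For the \emph{subcritical} range $0\leq j\leq \min\{k,n-1\}$, the function $m_{-j}$ is locally integrable near the origin (since $j<n$), so it extends canonically to a tempered distribution. The standard fact that the Fourier transform of a $(-j)$-homogeneous tempered distribution which is smooth off the origin is again $(j-n)$-homogeneous and smooth off the origin then yields $K_j = H_{j-n}$, as desired. Here one only needs to invoke the Schwartz-class theory of homogeneous distributions, together with the fact that elliptic ellipticity makes $m_{-j}$ smooth on the sphere.

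The substantive point is the \emph{critical} case $j=n\leq k$, where $m_{-n}$ is $(-n)$-homogeneous and thus fails to be locally integrable unless its spherical average vanishes. I would split
\[
m_{-n}(\xi) \;=\; \frac{\mathbf{L}(\cdot)}{\omega_{n-1}|\xi|^n} \;+\; \Big(m_{-n}(\xi) - \frac{\mathbf{L}(\cdot)}{\omega_{n-1}|\xi|^n}\Big),
\]
with $\mathbf{L}$ as in \eqref{eq:L_wcanc}, so that the second summand has mean zero on $\mathbb{S}^{n-1}$ and therefore is a bona fide Calderón--Zygmund symbol whose inverse Fourier transform (as a principal value) is a $0$-homogeneous function smooth off the origin, yielding the contribution $H_0$. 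For the mean-value term one uses the classical distributional identity $\mathscr{F}(\log|x|) = -c_n\,\mathrm{p.f.}|\xi|^{-n} + c_n'\delta_0$ (with the constants $c_n,c_n'$ coming from Hadamard's finite part), which upon inversion gives $\mathscr{F}^{-1}(\mathrm{p.f.}|\xi|^{-n}) = \alpha_n \log|x| + \beta_n$. Absorbing constants into the normalisation of $\mathbf{L}$, one obtains the contribution $\log|\cdot|\mathbf{L}$ and thereby $K_n = H_0 + \log|\cdot|\mathbf{L}$.

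The hardest part is the bookkeeping in the critical case: carrying the constants through the Hadamard regularisation, matching them with the spherical-average formula \eqref{eq:L_wcanc}, and checking that the ambiguous additive constant inherent to $\log|\cdot|$ produces no obstruction in the convolution $K_n\star\B u$ for $u\in\hold^\infty_c$. The latter holds because $\int_{\R^n}\B u\,\dif x = 0$ (any constant component of $\B u$ vanishes by the divergence theorem applied to $\B u$, as $\B u = \sum_{|\beta|=k}B_\beta\partial^\beta u$ with $k\geq 1$), so that adding a constant to the kernel does not alter the convolution. Once these points are settled, the formulas coincide with those in \cite[Lem.~2.1]{BVS} and \cite[Eq.~(3.1)]{Rdiff}.
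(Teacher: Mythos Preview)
The paper does not give its own proof of this lemma; it is stated as a known fact with the parenthetical ``(see \cite[Lem.~2.1]{BVS}, \cite[Eq.~(3,1)]{Rdiff}, and the references therein)''. Your sketch is precisely the standard argument underlying those references: compute $\mathscr{F}^{-1}m_{-j}$ via the homogeneous-distribution calculus, treating the subcritical range $j<n$ directly by local integrability, and in the critical case $j=n$ split off the spherical mean $\mathbf{L}$ so that the remainder is a mean-zero $(-n)$-homogeneous symbol whose principal-value inverse transform is $0$-homogeneous, while the $\mathrm{p.f.}|\xi|^{-n}$ part contributes the logarithm. Your observation that the ambiguous additive constant is harmless because $\int_{\R^n}\B u\,\dif x=0$ for $u\in\hold^\infty_c$ is exactly the point that makes the formula well-defined at the level of convolutions. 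There is nothing to correct; your outline matches the proof in \cite{BVS,Rdiff} that the paper is invoking.
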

	We further recall that an operator $\B$ is said to be \emph{canceling} if and only if 
	\begin{align}\label{eq:intersection}
	\mathcal{I}\coloneqq\bigcap_{\xi\in\mathbb{S}^{n-1}}\mathrm{im\,}\B(\xi)=\{0\}.
	\end{align}
	It is essentially shown in \cite[Lem.~2.5]{Rdiff} based on \cite[Prop.~6.1]{VS} that, for an elliptic operator $\B$, we have that $\mathcal{I}=\{w\in W\colon  \B u=\delta_0w \text{ for some }u\in\lebe^1_{\locc}(\R^n,V)\}$. In particular, in the class of elliptic operators, the canceling operators are precisely those for which the space of measures $\{\B u\}$ contains no Dirac measures. In contrast, an operator is said to be \emph{weakly canceling}, as introduced in \cite[Sec.~1.3]{Rdiff}, if and only if
	$\mathbf{L}(\mathcal{I})=\{0\}$. In view of Lemma~\ref{lem:conv}, it is not difficult to see that an elliptic operator $\B$ is weakly canceling if and only if $D^{k-n}u\in\lebe^\infty_{\locc}$ whenever $\B u=\delta_0 w$ for some $w\in W$ \cite[Lem.~4.3]{Rdiff}. We have the following:
	\begin{lemma}\label{lem:char(w)canc}
		Let $\B$ be an elliptic operator as in \eqref{eq:B}, $\mathcal{I}$, $\mathbf{L}$ be defined by \eqref{eq:intersection}, \eqref{eq:L_wcanc} respectively, and consider the system
		\begin{align}\label{eq:canc_eq}
		\B u=\delta_0 w\quad\text{ for }u\in\lebe^1_{\locc}(\R^n,V),\,w\in W.
		\end{align}
		Then:
		\begin{enumerate}
			\item $\B$ is weakly canceling (i.e., $\mathbf{L}(\mathcal{I})=\{0\}$) if and only if all solutions $u$ of \eqref{eq:canc_eq} are such that $D^{k-n}u\in\lebe^\infty_{\locc}$.
			\item $\B$ is canceling (i.e., $\mathcal{I}=\{0\}$) if and only if \eqref{eq:canc_eq} implies that $w=0$.
		\end{enumerate}
	\end{lemma}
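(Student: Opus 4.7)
The plan is to reduce both characterizations to the representation formula of Lemma~\ref{lem:conv}, applied distributionally to the point source $\delta_0 w$. The starting point, which I would invoke from \cite[Prop.~6.1]{VS} and \cite[Lem.~2.5]{Rdiff} as already noted in the excerpt, is the identification
\begin{equation*}
\mathcal{I}=\{w\in W\colon \B u=\delta_0 w\text{ has a solution }u\in\lebe^1_{\locc}(\R^n,V)\}.
\end{equation*}
Granting this, part (b) is immediate: $\mathcal{I}=\{0\}$ is precisely the statement that \eqref{eq:canc_eq} admits a solution only for $w=0$.

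For part (a), I would fix $w\in\mathcal{I}$ and let $u$ be any solution of \eqref{eq:canc_eq}. Since two such solutions differ by an element of $\ker\B$ and $\B$ is elliptic, interior regularity forces the difference to be smooth, so whether $D^{k-n}u$ lies in $\lebe^\infty_{\locc}$ is independent of the chosen solution. To pin one down I would take $u$ to be the distributional inverse Fourier transform of $(-\imag)^{-k}\B^\dagger(\xi)w$, which is precisely the convolution of the fundamental solution of $\B$ with $\delta_0 w$. Extending Lemma~\ref{lem:conv} to this distributional setting then gives
\begin{equation*}
D^{k-n}u = K_n\star(\delta_0 w) = H_0 w + \log|\cdot|\,\mathbf{L}w,
\end{equation*}
modulo polynomial corrections arising from $\ker\B$. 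Since $H_0$ is $0$-homogeneous and smooth away from the origin, $H_0 w$ is locally bounded, while $\log|\cdot|\,\mathbf{L}w$ is locally bounded near the origin if and only if $\mathbf{L}w=0$. Letting $w$ range over $\mathcal{I}$ yields the claimed equivalence.

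The main obstacle is the careful distributional extension of Lemma~\ref{lem:conv} to the delta source and the attendant management of the polynomial ambiguity in the fundamental solution; once this is in place, the rest reduces to observing that the obstruction to local boundedness is precisely the coefficient $\mathbf{L}w$ of $\log|\cdot|$ appearing in the representation of $K_n$. Since the essential content of both parts is already present in \cite[Lem.~2.5 and Lem.~4.3]{Rdiff}, the actual writeup would largely consist of recalling those arguments in the present notation and recording that the two conditions $\mathcal{I}=\{0\}$ and $\mathbf{L}(\mathcal{I})=\{0\}$ correspond, through the above representation, to removing the Dirac mass and the logarithmic singularity from $\B u$ and $D^{k-n}u$ respectively.
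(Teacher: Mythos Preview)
Your proposal is correct and follows essentially the same approach as the paper: the lemma is not given a separate proof in the paper but is instead justified in the paragraph immediately preceding it, which invokes exactly the ingredients you use---the identification $\mathcal{I}=\{w\colon \B u=\delta_0 w\text{ solvable in }\lebe^1_{\locc}\}$ from \cite[Lem.~2.5]{Rdiff} and \cite[Prop.~6.1]{VS} for part~(b), and the representation $D^{k-n}u=H_0w+\log|\cdot|\,\mathbf{L}w$ from Lemma~\ref{lem:conv} together with \cite[Lem.~4.3]{Rdiff} for part~(a). Your remarks on the polynomial ambiguity and the distributional extension of the convolution formula are the right points to flag, and they are handled in the cited references.
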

	Finally, we remark that cancellation is indeed strictly stronger than weak cancellation. This can be seen already from the Introduction, where the operator $\Delta\circ(\di,\curl)$ on $\R^3$ was mentioned. More examples illustrating the difference between the two classes can be found in \cite[Sec.~4.3]{Rdiff}.
	
	We will also discuss $\C$-elliptic operators, originally introduced by Smith in \cite{Smith_old,Smith}, aiming to obtain boundary $\lebe^p$-estimates for linear systems in non-smooth (e.g., Lipschitz) domains. Such operators were later studied in \cite{Kala1,Kala2} and, more recently, in connection with boundary $\lebe^1$-estimates in \cite{BDG,GR}. In fact, the terminology was first used in \cite{BDG}, for first order operators. Apart from estimates near and on the boundary, $\C$-elliptic operators are often relevant in applications, e.g., plasticity \cite{AnGi,StTe}, fracture mechanics \cite{ChCr}, and image reconstruction \cite{DaFoLi}.
	
	Recall that an operator $\B$ as in \eqref{eq:B} is said to be \emph{$\C$-elliptic} if and only if $\ker_\C\B(\xi)=\{0\}$ for all $0\neq\xi\in\C^n$. Of course, $\C$-elliptic operators are elliptic. The converse is not true, as can be seen from the example $\B=(\di,\curl)$. 
	
	We will use the following embedding result, which was not mentioned in \cite{GR}, but easily follows from \cite[Thm.~1.2, Lem.~3.2]{GR} and the strict density lemma \cite[Lem.~4.15]{BDG} (the latter concerns first order operators, but is easily adapted to higher order operators, as can be seen from the upcoming proof in \cite{GRVS}).
	\begin{proposition}\label{prop:GR}
		Let $n>1$, $\B$ as in \eqref{eq:B} be an $n$-th order operator, and $\Omega\subset\R^n$ be a bounded Lipschitz domain. Then $\B$ is $\C$-elliptic if and only if
		\begin{align*}
		\|u\|_{\lebe^\infty(\Omega,V)}\leq c_\Omega\left(|\B u|(\Omega)+\|u\|_{\lebe^1(\Omega,V)}\right)\quad\text{ for }u\in\bv^\B(\Omega).
		\end{align*}
	\end{proposition}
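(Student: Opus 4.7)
The plan is to treat the two implications separately, leveraging the three ingredients cited by the authors: the $\C$-elliptic estimate \cite[Thm.~1.2]{GR}, the explicit unbounded null solutions constructed in \cite[Lem.~3.2]{GR}, and the higher-order analogue of the $\B$-strict density lemma \cite[Lem.~4.15]{BDG}.

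For the necessary direction, I would argue by contrapositive. If $\B$ fails $\C$-ellipticity, then by \cite[Lem.~3.2]{GR} there exist non-trivial polynomial solutions $P\colon\R^n\rightarrow V$ of $\B P=0$ of arbitrarily high degree. Restricting such a polynomial $P$ of degree $d$ to $\Omega$ and exploiting that on a bounded Lipschitz domain $\|P\|_{\lebe^1(\Omega)}$ and $\|P\|_{\lebe^\infty(\Omega)}$ are finite but the latter grows like $(\diam \Omega)^{d}$ faster than the former as $d\rightarrow\infty$, one produces a sequence $(u_j)\subset\ker\B$ with $\|u_j\|_{\lebe^1(\Omega)}=1$ but $\|u_j\|_{\lebe^\infty(\Omega)}\rightarrow\infty$, contradicting the claimed inequality. (This is essentially \cite[Lem.~\ref{lem:nec_C-ell}]{}, mentioned in the Introduction and referenced in the paper.)

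For the sufficient direction, I would proceed in three steps. \emph{Step 1}: Note that $\C$-ellipticity is strictly stronger than cancellation in the elliptic class (as remarked in the Introduction), so \eqref{eq:Linfty} yields $\|u\|_{\lebe^\infty(\R^n,V)}\leq c\|\B u\|_{\lebe^1(\R^n)}$ for all $u\in\hold^\infty_c(\R^n,V)$. \emph{Step 2}: Extend this bound to the full space $\bv^\B(\R^n)$ by $\B$-strict density. Given $v\in\bv^\B(\R^n)$, the higher-order analogue of \cite[Lem.~4.15]{BDG} (to appear in \cite{GRVS}) produces $(v_j)\subset\hold^\infty_c(\R^n,V)$ with $v_j\rightarrow v$ $\B$-strictly. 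After extracting a subsequence so that $v_j\rightarrow v$ pointwise $\mathscr{L}^n$-a.e., lower semi-continuity of $\|\cdot\|_{\lebe^\infty}$ under a.e.\ convergence combined with $\|\B v_j\|_{\lebe^1}\rightarrow |\B v|(\R^n)$ gives
\begin{align*}
\|v\|_{\lebe^\infty(\R^n,V)}\leq c\,|\B v|(\R^n).
\end{align*}
\emph{Step 3}: Apply the $\C$-elliptic extension from \cite[Thm.~1.2]{GR} to obtain a bounded extension operator $E\colon \bv^\B(\Omega)\rightarrow\bv^\B(\R^n)$ satisfying $|\B(Eu)|(\R^n)+\|Eu\|_{\lebe^1(\R^n)}\leq c_\Omega(|\B u|(\Omega)+\|u\|_{\lebe^1(\Omega)})$, and apply Step 2 to $Eu$.

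The key technical hurdle is Step 2: the $\B$-strict topology is not strong enough to transmit an $\lebe^\infty$-bound by continuity alone, since $\|\cdot\|_{\lebe^\infty}$ is discontinuous with respect to strict convergence. The workaround is to combine strict convergence of $\B v_j$ with pointwise a.e.\ convergence of $v_j$ (which one arranges by choosing the mollification parameters so that Lebesgue points are preserved) and exploit that the estimate from Step 1 bounds $\|v_j\|_{\lebe^\infty}$ uniformly by a quantity converging to $c|\B v|(\R^n)$. Once this is in place, the extension step and the necessity direction are essentially bookkeeping.
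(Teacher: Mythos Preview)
Your sufficiency argument has a genuine gap in Step~3. You invoke an extension operator $E\colon\bv^\B(\Omega)\rightarrow\bv^\B(\R^n)$ attributed to \cite[Thm.~1.2]{GR}, but the paper states explicitly (in the paragraph following Theorem~\ref{thm:main2}) that such an extension is presently known only for $\sobo^{\B,1}$ and \emph{not} for $\bv^\B$ in the higher-order setting. The paper's proof sidesteps this by reversing the order of your Steps~2 and~3: it first combines \cite[Thm.~1.2]{GR} with \eqref{eq:Linfty} to obtain the inequality for $u\in\hold^\infty(\bar\Omega,V)$ (for such smooth maps the $\sobo^{\B,1}$-extension is available), and only \emph{then} applies the strict-density lemma \cite[Lem.~4.15]{BDG} \emph{on the domain $\Omega$}, producing approximants $u_j\in\hold^\infty(\bar\Omega,V)$ converging $\B$-strictly to $u$ in $\bv^\B(\Omega)$. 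The passage to the limit via a.e.\ convergence is then exactly as you describe in Step~2. So the ingredients you identified are correct, but the logical order matters: strict density must be done on $\Omega$, not on $\R^n$, precisely because the $\bv^\B$-extension is missing.

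Your necessity argument is also off. The claim that for polynomial null solutions $P_d$ of degree $d$ one has $\|P_d\|_{\lebe^\infty(\Omega)}/\|P_d\|_{\lebe^1(\Omega)}\rightarrow\infty$ ``like $(\diam\Omega)^d$'' is not substantiated: the domain is fixed, and the ratio depends on the specific polynomials produced by \cite[Lem.~3.2]{GR}, not merely on their degree. The paper instead constructs, as in Lemma~\ref{lem:nec_C-ell}, an explicit map $u(x)=f((x-x_0)\cdot\xi)v$ with $f$ holomorphic and $\partial^{n-1}f(z)=z^{-1}$, which lies in $\bv^\B(\Omega)$ but is unbounded near a boundary point; this directly violates the claimed inequality.
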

		Here $c_\Omega>0$ depends both on the size and the geometry of $\Omega$.
	\begin{proof}
		Suppose that $\B$ is $\C$-elliptic. We know from \cite[Thm.~1.2]{GR} and \ref{eq:Linfty} that the claimed inequality holds for maps $u\in\hold^\infty(\bar\Omega,V)$. Let $u\in\bv^\B(\Omega)$. One can then modify \cite[Lem.~4.15]{BDG} to show there exist $u_j=T_ju\in \hold(\bar\Omega,V)$ that converge strictly to $u$ in $\bv^\B(\Omega)$. By passing to a subsequence, we can assume that $u_j$ converges to $u$ $\mathscr{L}^n$-a.e., so that
		\begin{align*}
			|u(x)|&=\lim_j |u_j(x)|\leq c_\Omega \lim_j\left(\|\B u_j\|_{\lebe^1(\Omega)}+\| u_j\|_{\lebe^1(\Omega)}\right)\\&=c_\Omega \left(\|\B u\|_{\lebe^1(\Omega)}+\| u\|_{\lebe^1(\Omega)}\right)
		\end{align*}
		for $\mathscr L^n$-a.e. $x\in\Omega$. This concludes the proof of sufficiency.
		
		Necessity of $\C$-ellipticity follows as in the final part of \cite[Sec.~4.2]{GR} (alternatively, see the proof of Lemma~\ref{lem:nec_C-ell}, where we construct a map in $\bv^\B(\Omega)$ that is unbounded near the boundary).
	\end{proof}
	We conclude this section by giving a class of (academic) examples of $\C$-elliptic operators of order $2$ on $\R^2$ that do not seem to reduce to the operator $\partial_1\partial_2$ which was analyzed in \cite{PVS}.
	\begin{example}
		Let $a,b>0$ be two distinct real numbers. Then the operator
		\begin{align*}
			\B(\xi)\coloneqq \left(\xi_1^2+a\xi_2^2,\,\xi_1^2+b\xi_2^2\right),\quad\text{ for }\xi\in\R^2,
		\end{align*}
		is $\C$-elliptic on $\R^2$ from $\R$ to $\R^2$.
	\end{example}
	In general, one can consider two homogeneous real polynomials $p_1,\,p_2$ on $\R^2$ that have no common non-trivial complex roots and set $\B(\xi)\coloneqq(p_1(\xi),\,p_2(\xi))$ for $\xi\in\R^2$.
	\subsection{Strict density and continuity}\label{sec:prel_strict}
	We extend the definition of $\sobo^{\B,1}$ and $\bv^\B$ from \eqref{eq:def_sob}, \eqref{eq:def_norm} to operators of order $k$ in an obvious manner. 
	
	We begin by discussing norm-density of smooth maps in $\sobo^{\B,1}$, as well as norm-continuity of the embeddings of $\sobo^{\B,1}$.
	\begin{lemma}\label{lem:WB1_density}
		Let $\B$ be an operator as in \eqref{eq:B} and $u\in\lebe^1_{\locc}(\R^n,V)$ be such that $\B u\in\lebe^1(\R^n,W)$. Then there exist $u_j\in\hold^\infty_c(\R^n,V)$ be such that $\B u_j\rightarrow \B u$ in $\lebe^1(\R^n,W)$.
	\end{lemma}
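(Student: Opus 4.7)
The plan is to use the classical mollify-then-truncate scheme. Let $(\rho_\varepsilon)_{\varepsilon > 0}$ be a non-negative standard mollifier with $\spt \rho_\varepsilon \subset B_\varepsilon$, and let $\eta_R(x) \coloneqq \eta(x/R)$ for some fixed $\eta \in \hold^\infty_c(B_2)$ with $\eta \equiv 1$ on $B_1$; then $\|\partial^\alpha \eta_R\|_{\lebe^\infty} \leq C_\alpha R^{-|\alpha|}$ and $\spt \eta_R \subset B_{2R}$. I would construct the sequence $u_j$ from the two-parameter family $u_{\varepsilon, R} \coloneqq \eta_R \, (\rho_\varepsilon \star u) \in \hold^\infty_c(\R^n, V)$ by a diagonal extraction.

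The mollification step is immediate: since $\B$ has constant coefficients, $\B(\rho_\varepsilon \star u) = \rho_\varepsilon \star \B u$, and $\B u \in \lebe^1(\R^n, W)$ yields $\B(\rho_\varepsilon \star u) \to \B u$ in $\lebe^1$ as $\varepsilon \to 0$. The Leibniz rule then produces
\[
\B u_{\varepsilon, R} \;=\; \eta_R \, \B(\rho_\varepsilon \star u) \;+\; [\B, \eta_R](\rho_\varepsilon \star u),
\]
and the first summand converges to $\B(\rho_\varepsilon \star u)$ in $\lebe^1$ as $R \to \infty$ by dominated convergence.

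The heart of the matter is thus to control the commutator $[\B, \eta_R](\rho_\varepsilon \star u)$, which is a finite linear combination of terms $(\partial^\gamma \eta_R)(\partial^\delta(\rho_\varepsilon \star u)) B_{\gamma+\delta}$ with $|\gamma| \geq 1$ and $|\gamma|+|\delta| = n$, all supported in the annulus $B_{2R}\setminus B_R$. Combining $|\partial^\gamma \eta_R| \leq CR^{-|\gamma|}$ with $\partial^\delta(\rho_\varepsilon \star u) = (\partial^\delta \rho_\varepsilon) \star u$ and Young's inequality gives
\[
\bigl\|(\partial^\gamma \eta_R)(\partial^\delta(\rho_\varepsilon \star u))\bigr\|_{\lebe^1(\R^n)} \;\leq\; C R^{-|\gamma|} \varepsilon^{-|\delta|} \|u\|_{\lebe^1(A_R^\varepsilon)},
\]
where $A_R^\varepsilon$ is the $\varepsilon$-fattening of $B_{2R}\setminus B_R$. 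When $u \in \lebe^1(\R^n)$ the right-hand side tends to $0$ as $R \to \infty$ for fixed $\varepsilon$ by absolute continuity of the Lebesgue integral, and then letting $\varepsilon \to 0$ finishes the argument.

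The main obstacle is the reduction to the $\lebe^1(\R^n)$ case from the mere $\lebe^1_{\locc}$ hypothesis, under which $\|u\|_{\lebe^1(B_R)}$ may a priori grow arbitrarily. My plan for this is to exploit the freedom to replace $u$ with $u - p$ for any $p \in \ker \B$, using this to absorb the growth of $u$ at infinity: in the elliptic case, the representation of $u$ modulo $\ker\B$ afforded by convolution with the fundamental solution (cf.\ Lemma~\ref{lem:conv}) together with $\B u \in \lebe^1$ yields an effective bound on $\|u\|_{\lebe^1(B_R)}$ strong enough to drive the commutator estimate to zero along an appropriate diagonal $\varepsilon = \varepsilon(R) \to 0$; in the non-elliptic case, one instead localizes via a partition of unity on dyadic annuli and estimates the contributions of each annulus using that $\B u$ has vanishing $\lebe^1$-mass at infinity. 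Establishing this controlled growth modulo $\ker\B$, rather than the mollify-truncate scheme itself, is the delicate technical step of the proof.
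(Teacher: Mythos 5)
Your mollify-then-truncate skeleton is reasonable and close in spirit to the paper's argument (the paper proceeds in the opposite order: truncate first, then mollify the compactly supported truncation with a partition of unity as in \cite[Sec.~1.1.5]{Mazya}, and extract a diagonal sequence, so that all behaviour at infinity is concentrated in the single assertion $\B(\rho_N u)\to\B u$). Your scheme does close whenever $u\in\lebe^1(\R^n)$, which in particular covers the situations where the lemma is actually invoked later, since there $u\in\sobo^{\B,1}(\R^n)\subset\sobo^{k-1,1}(\R^n,V)$. The genuine gap is exactly the step you defer, and as you have set it up it cannot be repaired: replacing $u$ by $u-p$ with $p\in\ker\B$ does not produce a globally integrable function. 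In the elliptic case the representation of Lemma~\ref{lem:conv} gives $u-p=K_k\star\B u$ with $K_k$ homogeneous of degree $k-n$ (plus a logarithm when $k=n$), so $\|u-p\|_{\lebe^1(B_{2R}\setminus B_R)}$ grows like a power of $R$ (of order $R^{k}$ in general) rather than tending to zero; your displayed commutator bound then behaves like $\varepsilon^{-|\delta|}R^{|\delta|}$ and diverges as $R\to\infty$ at fixed $\varepsilon$. The Young-inequality step that puts all $|\delta|$ derivatives on the mollifier is the wrong estimate here: one must keep them on $u$, noting $D^{k-|\gamma|}u=K_{|\gamma|}\star\B u\in\lebe^{n/(n-|\gamma|)}_{\mathrm{w}}$ and $\|\partial^\gamma\eta_R\|_{\lebe^{n/|\gamma|}}=O(1)$, and then still supply an argument that these critical norms do not concentrate at infinity (e.g.\ via \eqref{eq:VS_j} or the global $\sobo^{k-1,1}$ control available in the applications). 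None of this is in your proposal.

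The non-elliptic branch of your plan is not merely ``delicate''; it cannot work as described. Every $v\in\hold^\infty_c(\R^n,V)$ satisfies $\int_{\R^n}\B v\,\dif x=0$, so the conclusion forces $\int_{\R^n}\B u\,\dif x=0$, a linear constraint that no dyadic-annulus or vanishing-mass-at-infinity bookkeeping can circumvent. For instance, take $\B=\partial_1$ on $\R^2$ and $u(x)=F(x_1)h(x_2)$ with $F'=f$, where $f,h\in\hold^\infty_c(\R)$ are nonnegative with unit integral: then $u\in\lebe^\infty(\R^2)\subset\lebe^1_{\locc}$ and $\B u=f(x_1)h(x_2)\in\lebe^1(\R^2)$ has integral $1$, hence is not an $\lebe^1$-limit of $\B v_j$ with $v_j\in\hold^\infty_c$. (This also shows that the deferred reduction is precisely where ellipticity, or the global integrability of the lower-order derivatives as in the paper's use of the lemma, must enter.) As it stands, your outline identifies the crux correctly but does not prove it, and the quantitative route you sketch for it fails.
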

	
	\begin{proof}
		Fix $N>0$, choose $\rho_N \in \hold_c^\infty(B_N(0), V)$ and let $u^N = \rho_N u$, so that $u^N\in \sobo^{\B,1}_c(B_N(0), V)$. Applying a standard mollification argument with a partition of unity (see \cite[Sec.~1.1.5]{Mazya}), we obtain a sequence $\{u^N_j\} \subset \hold^\infty_c(B_N(0), V)$ with $\B u^N_j\rightarrow \B u^N$ in $\lebe^1(B_N(0),W)$, where the compact support follows from the compact essential support of $u^N$ in $B_N(0)$. Extending by zero to $\R^n$ and combining this with the fact that $\B u^N \to \B u$ in $\lebe^1(\R^n,W)$, the result follows when we extract a diagonal subsequence.
	\end{proof}
	It follows that in $\sobo^{\B,1}$ we have both norm-density, as well as norm-continuity:
	\begin{lemma}\label{lem:WB1_ctnity}
		Let $j=1,\ldots, \min\{k,n\}$ and $\B$ as in \eqref{eq:B} be elliptic and
		\begin{enumerate}
			\item canceling if $j<n$;
			\item weakly canceling if $j=n\leq k$.
		\end{enumerate}
		Then the embedding $\sobo^{\B,1}(\R^n)\hookrightarrow\dot{\sobo}{^{k-j,n/(n-j)}}(\R^n,V)$ is norm-continuous.
	\end{lemma}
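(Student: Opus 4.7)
The plan is to reduce to the known critical Sobolev estimates on test functions and then complete by density. The essential input is that, for every $\phi \in \hold^\infty_c(\R^n, V)$, one has
\begin{align*}
\|D^{k-j}\phi\|_{\lebe^{n/(n-j)}(\R^n)} \leq c \|\B \phi\|_{\lebe^1(\R^n)}.
\end{align*}
For $j = 1, \ldots, n-1$, this is the Van~Schaftingen inequality \eqref{eq:VS_j} under ellipticity and cancellation; for $j = n \leq k$, it is the extension of \eqref{eq:Linfty} established in \cite[Thm.~1.3]{Rdiff}, requiring ellipticity and weak cancellation (with the convention $n/0 = \infty$, so the left-hand side reads $\|D^{k-n}\phi\|_{\lebe^\infty}$).

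Given $u \in \sobo^{\B,1}(\R^n)$, I would first upgrade Lemma~\ref{lem:WB1_density} to produce approximants $u_m \in \hold_c^\infty(\R^n, V)$ satisfying simultaneously $u_m \to u$ in $\sobo^{k-1,1}(\R^n, V)$ and $\B u_m \to \B u$ in $\lebe^1(\R^n, W)$. This is essentially built into the existing proof: the cutoff stage $u \mapsto \rho(\cdot/N)\,u$ generates commutator terms involving $D^\ell \rho(\cdot/N) \cdot D^{|\alpha|-\ell}u$ of scale $N^{-\ell}$ for $\ell \geq 1$ acting on $\sobo^{k-1,1}$-regular components of $u$, all of which vanish in $\lebe^1$ by dominated convergence. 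A standard mollification and diagonal extraction then complete the construction.

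Applying the core estimate to the differences $u_m - u_l \in \hold_c^\infty$ yields
\begin{align*}
\|D^{k-j}(u_m - u_l)\|_{\lebe^{n/(n-j)}(\R^n)} \leq c \|\B(u_m - u_l)\|_{\lebe^1(\R^n)} \longrightarrow 0,
\end{align*}
so $\{D^{k-j}u_m\}$ is Cauchy in $\lebe^{n/(n-j)}(\R^n)$ and converges to some $v$. The $\sobo^{k-1,1}$-convergence of $u_m$ to $u$ forces $D^{k-j}u_m \to D^{k-j}u$ in $\lebe^1_{\locc}$, whence $v = D^{k-j}u$ almost everywhere. Passing to the limit in $\|D^{k-j}u_m\|_{\lebe^{n/(n-j)}} \leq c \|\B u_m\|_{\lebe^1}$ delivers
\begin{align*}
\|D^{k-j}u\|_{\lebe^{n/(n-j)}(\R^n)} \leq c \|\B u\|_{\lebe^1(\R^n)} \leq c \|u\|_{\sobo^{\B,1}(\R^n)},
\end{align*}
which is precisely the claimed norm-continuity of the embedding into $\dot{\sobo}{^{k-j, n/(n-j)}}(\R^n, V)$.

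None of the steps presents substantive difficulty, since the deep analytic content is encapsulated in the critical $\lebe^1$-estimates of \cite{VS, BVS, Rdiff} invoked as a black box. The only point requiring some care is the refinement of Lemma~\ref{lem:WB1_density} to $\sobo^{k-1,1}$-convergence of the approximants: this is what permits identifying the $\lebe^{n/(n-j)}$-limit $v$ with $D^{k-j}u$ rather than leaving it as an abstract element of the target completion.
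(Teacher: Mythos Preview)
Your proposal is correct and follows essentially the same approach as the paper: approximate by $\hold_c^\infty$ maps converging both in $\sobo^{k-1,1}$ and in the $\B$-seminorm, use the Cauchy property from the critical estimates \eqref{eq:VS_j}/\eqref{eq:Linfty}, and identify the limit via the lower-order convergence. The paper's version is terser about the upgrade of Lemma~\ref{lem:WB1_density} and spells out the final continuity step by applying the resulting estimate to a difference $v_j-v$, but the content is the same.
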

	\begin{proof}
		Let $u \in \sobo^{\B,1}(\R^n)$ and find a sequence $\{u_j\} \subset \hold^\infty_c(\R^n,V)$ as given by Lemma \ref{lem:WB1_density} that also converges to $u$ in $\sobo^{k-1,1}(\R^n,V)$. The conclusion follows from applying the estimates \eqref{eq:VS_j} and \eqref{eq:Linfty} to show that $\{u_j\}$ is Cauchy in $\dot{\sobo}{^{k-j,n/(n-j)}}$. The $\lebe^1$-convergence of $u_j$ to $u$ allows us to verify that the limit is the appropriate derivative of $u$ by the uniqueness of weak derivatives. It follows that the estimates \eqref{eq:VS_j} or \eqref{eq:Linfty} hold as appropriate for maps in $\sobo^{\B,1}(\R^n)$.
		
		Now assume that $v_j\rightarrow v$ in $\sobo^{\B,1}(\R^n)$. The fact that $D^{k-j}v_j\rightarrow D^{k-j}v$ in $\lebe^{n/(n-j)}$ follows directly from \eqref{eq:VS_j} or \eqref{eq:Linfty}.
	\end{proof}
	In particular, by taking $j=k=n$, we immediately see that weak cancellation suffices to guarantee continuity of $\sobo^{\B,1}$-maps when $\B$ has order $n$:
	\begin{proposition}
		Let $\B$ as in \eqref{eq:B} be elliptic and weakly canceling of order $n$. Then maps in $\sobo^{\B,1}(\R^n)$ are continuous (vanishing at infinity).
	\end{proposition}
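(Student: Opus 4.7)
The plan is to combine Lemma~\ref{lem:WB1_density} with the case $j=k=n$ of Lemma~\ref{lem:WB1_ctnity} (which requires only weak cancellation) and then exploit the fact that $\hold_0(\R^n,V)$ is the closure of $\hold_c^\infty(\R^n,V)$ in the supremum norm. The essential observation is that, under the hypotheses, the embedding $\sobo^{\B,1}(\R^n)\hookrightarrow\dot\sobo{^{0,\infty}}(\R^n,V)=\lebe^\infty(\R^n,V)$ is not just an inclusion but is norm-continuous, so a norm-approximation by smooth compactly supported maps yields uniform convergence, forcing the limit into $\hold_0$.

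First I would take an arbitrary $u\in\sobo^{\B,1}(\R^n)$ and produce, via Lemma~\ref{lem:WB1_density}, a sequence $u_j\in\hold_c^\infty(\R^n,V)$ such that $\B u_j\to\B u$ in $\lebe^1(\R^n,W)$; by the standard mollification-and-truncation construction behind that lemma one may, without loss of generality, arrange simultaneously that $u_j\to u$ in $\sobo^{k-1,1}(\R^n,V)$, so in fact $u_j\to u$ in the full norm of $\sobo^{\B,1}(\R^n)$. Next, applying Lemma~\ref{lem:WB1_ctnity} with $j=k=n$ (which is precisely the weakly-canceling case that the lemma admits), the embedding $\sobo^{\B,1}(\R^n)\hookrightarrow \lebe^\infty(\R^n,V)$ is norm-continuous, hence $u_j\to u$ uniformly on $\R^n$. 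Since each $u_j$ belongs to $\hold_c^\infty(\R^n,V)\subset\hold_0(\R^n,V)$ and $\hold_0(\R^n,V)$ is a closed subspace of $\lebe^\infty(\R^n,V)$ with respect to the supremum norm, the uniform limit $u$ lies in $\hold_0(\R^n,V)$, which is the desired conclusion.

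There is essentially no hard step here: the content is packaged in the two preceding lemmas, and the proposition is a quick corollary obtained by recognising that the critical Sobolev embedding provided by weak cancellation at the endpoint $j=n$ lands in $\lebe^\infty$ with norm-continuity. The only mild subtlety worth flagging is that one must be careful to secure \emph{norm} approximation in $\sobo^{\B,1}$ (rather than merely strict approximation), which is exactly what Lemma~\ref{lem:WB1_density} supplies and what distinguishes this easy $\sobo^{\B,1}$ statement from the substantially more delicate $\bv^\B$ version that will be the object of Theorems~\ref{thm:main} and \ref{thm:strict}.
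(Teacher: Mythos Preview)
Your proposal is correct and follows exactly the route the paper takes: the paper simply records that the proposition follows ``by taking $j=k=n$'' in Lemma~\ref{lem:WB1_ctnity}, and you have spelled out that one-line observation in full detail. One harmless quibble: under the paper's convention $\dot\sobo{^{k,p}}$ is the closure of $\hold_c^\infty$ in the $\|D^k\cdot\|_{\lebe^p}$-seminorm, so $\dot\sobo{^{0,\infty}}$ is already $\hold_0$ rather than all of $\lebe^\infty$; your argument does not actually use the identification $\dot\sobo{^{0,\infty}}=\lebe^\infty$ and goes through unchanged.
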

	
	This is in sharp contrast with the situation in $\bv^\B$, as illustrated by Lemma~\ref{prop:nec_canc}. On a different note, we also cannot expect norm-density results as for $\sobo^{\B,1}$, simply since $\lebe^1$-limits of $\hold^\infty_c$-sequences are $\lebe^1$-maps. In turn, it is easy to see that mollifications of measures converge weakly-* and, moreover, we have that:
	\begin{lemma}
		Let $\mu\in\mathcal{M}(\R^n)$. Then $\int_{\R^n}|\mu\star \eta_\varepsilon|\dif x\rightarrow |\mu|(\R^n)$ as $\varepsilon\downarrow0$. Here $\eta_\varepsilon$ is a standard sequence of mollifiers.
	\end{lemma}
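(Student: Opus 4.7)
The plan is to prove the equality by sandwiching $\int_{\R^n}|\mu\star\eta_\varepsilon|\dif x$ between two quantities that both converge to $|\mu|(\R^n)$.

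First I would establish the easy upper bound. Using Fubini's theorem together with the nonnegativity and unit mass of the mollifier,
\begin{align*}
\int_{\R^n}|\mu\star\eta_\varepsilon|(x)\dif x
&\leq \int_{\R^n}\int_{\R^n}\eta_\varepsilon(x-y)\dif|\mu|(y)\dif x\\
&= \int_{\R^n}\Bigl(\int_{\R^n}\eta_\varepsilon(x-y)\dif x\Bigr)\dif|\mu|(y)=|\mu|(\R^n),
\end{align*}
which gives $\limsup_{\varepsilon\downarrow0}\int_{\R^n}|\mu\star\eta_\varepsilon|\dif x\leq|\mu|(\R^n)$.

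For the matching lower bound, I would show that $\mu\star\eta_\varepsilon\wstar\mu$ in $\mathcal{M}(\R^n)$ as $\varepsilon\downarrow0$. For any test function $\varphi\in\hold_0(\R^n)$, the identity
\begin{align*}
\int_{\R^n}\varphi(x)(\mu\star\eta_\varepsilon)(x)\dif x=\int_{\R^n}(\varphi\star\check\eta_\varepsilon)(y)\dif\mu(y),
\end{align*}
together with the uniform convergence $\varphi\star\check\eta_\varepsilon\rightarrow\varphi$ on $\R^n$ for $\varphi\in\hold_0$ and finiteness of $|\mu|(\R^n)$, gives the claimed weak-$*$ convergence (where $\check\eta_\varepsilon(z)=\eta_\varepsilon(-z)$).

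Finally I would invoke the standard lower semicontinuity of total variation with respect to weak-$*$ convergence: using the Riesz representation of $|\mu|(\R^n)$ as the supremum of $\int\varphi\dif\mu$ over $\varphi\in\hold_0(\R^n)$ with $\|\varphi\|_\infty\leq 1$, each such test pairing passes to the limit and is bounded by $\liminf_{\varepsilon\downarrow0}|\mu\star\eta_\varepsilon|(\R^n)$. Taking the supremum over $\varphi$ yields $|\mu|(\R^n)\leq\liminf_{\varepsilon\downarrow0}\int_{\R^n}|\mu\star\eta_\varepsilon|\dif x$, which combined with the upper bound finishes the proof. No genuine obstacle is expected here; the only subtle step is the weak-$*$ convergence, which relies on the density of $\hold_c^\infty$ in $\hold_0$ and the uniform approximation $\varphi\star\check\eta_\varepsilon\to\varphi$.
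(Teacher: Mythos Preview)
Your argument is correct and is the standard proof of this classical fact. The paper does not actually supply a proof of this lemma; it is stated as a well-known result and used without further justification. Hence there is no proof in the paper to compare against, and your sandwiching via the Fubini upper bound and the lower semicontinuity of total variation under weak-$*$ convergence is exactly the expected route.
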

	In general, we say that if $\mu_j\wstar\mu$ and $|\mu_j|(\R^n)\rightarrow|\mu|(\R^n)$, then the sequence of measures $\mu_j$ is said to converge \emph{strictly} to $\mu$, which is {consistent with definition of $\B$-strict convergence from the Introduction}, which we recall here:
	\begin{definition}[Strict convergence]\label{def:strict}
		We say that $(u_j)_j \subset \bv^\B(\Omega)$ converges to $u$ \emph{$\B$-strictly} (or \emph{strictly in $\bv^\B(\Omega)$}, or just \emph{strictly}) if
		\[|\B u_j|(\Omega)\rightarrow|\B u|(\Omega) \qquad \text{and} \qquad u_j \rightarrow u \quad \text{in} \ \sobo^{k-1,1}(\Omega,V).
		\]
	\end{definition}
	In particular, this definition implies that both $(\B u_j)_j$ and $(|\B u_j|)_j$ converge weakly-* to the expected limits (the former by an elementary argument, and the latter by Reshetnyak's continuity theorem \cite[Thm.~2.39]{AFP_book}). Do note that the strong convergence of the lower derivatives prevents oscillations that the convergence of the masses of $\B u_j$ need not detect.
	With this definition, we have the following strict-density result for $\bv^\B$-functions:
	\begin{lemma}\label{lem:BVB_density}
		Let $\B$ be an operator as in \eqref{eq:B} and $u\in\bv^\B(\R^n)$. Then there exist $u_j\in\hold^\infty_c(\R^n,V)$ converging $\B$-strictly to $u$.
	\end{lemma}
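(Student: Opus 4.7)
The strategy is to approximate $u$ by compactly supported $\bv^\B$-maps first, and then mollify those. Concretely, fix $\chi\in\hold_c^\infty(B_2)$ with $\chi\equiv 1$ on $B_1$ and $0\leq\chi\leq 1$, set $\chi_R(x)=\chi(x/R)$, and define $u_R\coloneqq \chi_R u$, which lies in $\bv^\B(\R^n)$ with compact support in $\overline{B_{2R}}$. The key computation is the Leibniz expansion
\[
\B u_R = \chi_R \B u + g_R, \qquad g_R \coloneqq \sum_{|\beta|=k}\sum_{0<\gamma\leq\beta}\binom{\beta}{\gamma}\,B_\beta\bigl(\partial^\gamma \chi_R\bigr)\partial^{\beta-\gamma}u,
\]
where every summand in $g_R$ pairs a derivative $\partial^\gamma\chi_R$ of positive order with a derivative $\partial^{\beta-\gamma}u$ of order at most $k-1$. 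Using $\|\partial^\gamma\chi_R\|_{\lebe^\infty}\leq c R^{-|\gamma|}$ and the fact that $\partial^{\beta-\gamma}u\in\lebe^1(\R^n,V)$, the $\lebe^1$-norm of each summand is dominated by $cR^{-|\gamma|}\|\partial^{\beta-\gamma}u\|_{\lebe^1(B_{2R}\setminus B_R)}$, which vanishes as $R\to\infty$. Hence $g_R\to 0$ in $\lebe^1$.

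From this I extract strict convergence $u_R\to u$: by an analogous Leibniz argument $\partial^\alpha u_R-\chi_R\partial^\alpha u\to 0$ in $\lebe^1$ for every $|\alpha|<n$, while $\chi_R\partial^\alpha u\to\partial^\alpha u$ in $\lebe^1$ by dominated convergence, so $u_R\to u$ in $\sobo^{k-1,1}$. For the total variation of $\B u_R$, since $\chi_R\geq 0$ one has $|\chi_R\B u|=\chi_R|\B u|$ so that $|\chi_R\B u|(\R^n)=\int_{\R^n}\chi_R\,\mathrm{d}|\B u|\to|\B u|(\R^n)$ by dominated convergence; together with $\|g_R\|_{\lebe^1}\to 0$ and the reverse triangle inequality $\bigl||\B u_R|(\R^n)-|\chi_R\B u|(\R^n)\bigr|\leq\|g_R\|_{\lebe^1}$, this gives $|\B u_R|(\R^n)\to|\B u|(\R^n)$, completing the first stage.

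For the second stage, fix $R$ and let $u_R^\varepsilon\coloneqq u_R\star\eta_\varepsilon\in\hold_c^\infty(\R^n,V)$ (the compact support follows from that of $u_R$ and of $\eta_\varepsilon$). Standard mollifier properties give $u_R^\varepsilon\to u_R$ in $\sobo^{k-1,1}$, while $\B u_R^\varepsilon=(\B u_R)\star\eta_\varepsilon$ and the lemma immediately preceding Definition~\ref{def:strict} yield $\|\B u_R^\varepsilon\|_{\lebe^1}\to|\B u_R|(\R^n)$; thus $u_R^\varepsilon\to u_R$ $\B$-strictly as $\varepsilon\downarrow 0$. A diagonal extraction (allowed because the $\B$-strict topology is metrizable on norm-bounded subsets of $\bv^\B(\R^n)$) then yields a sequence $u_j=u_{R_j}^{\varepsilon_j}\in\hold_c^\infty(\R^n,V)$ converging $\B$-strictly to $u$. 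The only mildly delicate point is the commutator estimate for $g_R$; everything else is the standard truncation–mollification pattern familiar from the classical $\bv$ theory.
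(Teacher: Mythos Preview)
Your proof is correct and follows essentially the same two-stage truncation--mollification--diagonalization scheme as the paper. The only minor difference is that, since your truncated $u_R$ already has compact support, you can use plain convolution $u_R\star\eta_\varepsilon$ and the mass-convergence lemma directly, whereas the paper invokes the Meyers--Serrin style argument with a partition of unity (as in \cite[Thm.~3.9]{AFP_book}) and obtains strict convergence via the $\limsup$ bound combined with weak-$*$ lower semicontinuity; your route is slightly more direct here.
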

	\begin{proof}
		Let $u^N$ denote the same truncation of $u$ as in the proof of Lemma \ref{lem:WB1_density}, and once again combine mollification with an appropriate partition of unity (details can be found in \cite[Thm~3.9]{AFP_book}). This gives us convergence in $\sobo^{k-1,1}$ of a sequence $\{u^N_j\} \subset \hold^\infty_c(B_N(0), V)$ to $u^N$, coupled with the estimate
		\[\limsup_{j \to \infty}\int_{B_N(0)} |\B u^N_j|dx \leq |\B u^N|(B_N(0)).
		\]
		The weak-* compactness of $\lebe^1$-bounded subsets of $\mathcal{M}(\B_N(0),W)$ allows us to extract a weakly-* convergent subsequence and use lower semicontinuity of the total variation to obtain $\B$-strict convergence of $u^N_j$ to $u^N$. Combining this with the $\B$-strict convergence of $u^N$ to $u$ completes the proof.
	\end{proof}
	The strict-density Lemma above guarantees validity of the inclusions $\bv^{\B}(\R^n)\subset\dot{\sobo}{^{k-j,n/(n-j)}}(\R^n,V)$ under the assumptions of Lemma~\ref{lem:WB1_ctnity}. As for the (strict) continuity of these inclusions, we have:
	\begin{lemma}\label{lem:BVB_strict}
		Let $j=1,\ldots, \min\{k,n-1\}$ and $\B$ as in \eqref{eq:B} be elliptic and canceling. Then the embedding $\bv^\B(\R^n, V)\hookrightarrow{\sobo}^{k-j,n/(n-j)}(\R^n,V)$ is strictly-continuous.
	\end{lemma}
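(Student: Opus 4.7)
Let $p := n/(n-j)$. Suppose $(u_m)_m \subset \bv^\B(\R^n)$ converges $\B$-strictly to $u$, and set $v_m := u_m - u$, so that $v_m \to 0$ in $\sobo^{k-1,1}(\R^n,V)$, $\B v_m \wstar 0$ in $\mathcal{M}(\R^n,W)$, and, by Reshetnyak's continuity theorem, $|\B u_m| \wstar |\B u|$ strictly. Using Lemma~\ref{lem:BVB_density} and weak lower semicontinuity, the Van Schaftingen inequality from Lemma~\ref{lem:WB1_ctnity} extends to $\|D^{k-j}w\|_{\lebe^p(\R^n)} \leq c\,|\B w|(\R^n)$ for all $w \in \bv^\B(\R^n)$; in particular, $\{D^{k-j}v_m\}$ is bounded in $\lebe^p$. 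Since $v_m \to 0$ in $\lebe^1_{\locc}$, we have $D^{k-j}v_m \rightharpoonup 0$ weakly in $\lebe^p$. Passing to a subsequence (not relabeled), I may additionally assume that $D^{k-j}v_m \to 0$ almost everywhere, $|D^{k-j}v_m|^p\mathscr{L}^n \wstar \sigma$, and $|\B v_m| \wstar \tau$ for some finite nonnegative measures $\sigma,\tau$ on $\R^n$.

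The heart of the argument is a Lions-type concentration-compactness scheme coupled to the Van Schaftingen inequality. For $\phi \in \ccinfty(\R^n)$ with $\phi \geq 0$, I apply the extended inequality to $\phi v_m \in \bv^\B(\R^n)$: expanding via the Leibniz rule, the leading terms of $D^{k-j}(\phi v_m)$ and $\B(\phi v_m)$ are $\phi\cdot D^{k-j}v_m$ and $\phi\cdot\B v_m$ respectively, while every remaining commutator involves only strictly lower-order derivatives of $v_m$ multiplied by a compactly supported smooth function. By the strong $\sobo^{k-1,1}$-convergence $v_m\to 0$ and interpolation against the lower-order estimates of Lemma~\ref{lem:WB1_ctnity}, these commutators vanish in $\lebe^p$ on the left and in $\lebe^1$ on the right. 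Sending $m\to\infty$ produces the limiting inequality
\[
\left(\int_{\R^n}\phi^p\dif\sigma\right)^{1/p} \leq c\int_{\R^n}\phi\dif\tau,
\]
and a standard shrinking-support argument then forces $\sigma$ to be purely atomic, supported on atoms of $\tau$, with $\sigma(\{x_0\}) \leq c\,\tau(\{x_0\})^p$ at each $x_0 \in \R^n$.

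It remains to rule out atoms of $\tau$. From the strict convergence $|\B u_m|\wstar|\B u|$ and the inequality $|\B v_m| \leq |\B u_m| + |\B u|$, one obtains $\tau(\{x_0\}) \leq 2|\B u|(\{x_0\})$ for every $x_0 \in \R^n$, so it suffices to show that $\B u$ has no atoms whenever $u \in \bv^\B(\R^n)$ and $\B$ is canceling. If, for contradiction, $\B u$ had an atom $w\delta_{x_0}$ with $w\neq 0$, then localizing $u$ by a cutoff $\phi\in\ccinfty(\R^n)$ with $\phi\equiv 1$ near $x_0$, followed by distributional inversion of the non-atomic residue of $\B u$ through the fundamental solution of Lemma~\ref{lem:conv}, would produce a $\tilde u \in \lebe^1_{\locc}(\R^n,V)$ with $\B\tilde u = w\delta_{x_0}$ in distributions, contradicting Lemma~\ref{lem:char(w)canc}(b). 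Hence $\tau$ carries no atoms and $\sigma \equiv 0$.

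Finally, tightness at infinity of $|D^{k-j}v_m|^p$ follows from a cutoff-Van Schaftingen estimate applied outside large balls combined with the bound $|\B v_m|(B_R^c) \leq 2|\B u|(B_R^c) + o_m(1)$ (valid for $R$ large and $|\B u|$-null on $\partial B_R$), which is uniformly small as $R\to\infty$. Together with $\sigma\equiv 0$, the weak-$*$ convergence $|D^{k-j}v_m|^p\mathscr{L}^n \wstar 0$ and tightness give $\int_{\R^n}|D^{k-j}v_m|^p\dif x \to 0$, the desired conclusion. I expect the main technical obstacle to be the atom rule-out in the third paragraph: the distributional inversion of the non-atomic residue of $\B u$ through $K_n$ must be carried out with care in view of the over-determinacy of $\B(\xi)$, so that the resulting $\tilde u$ indeed lies in $\lebe^1_{\locc}$ and satisfies $\B\tilde u=w\delta_{x_0}$ in distributions, enabling Lemma~\ref{lem:char(w)canc}(b) to close the argument.
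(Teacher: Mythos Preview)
Your overall architecture is the same as the paper's: a Lions concentration--compactness scheme applied to the Van Schaftingen inequality, with a tightness argument at infinity and the crucial input that $\B u$ cannot charge points when $\B$ is elliptic and canceling. The paper organizes this via Vitali's theorem and the Brezis--Lieb lemma, whereas you work directly with the differences $v_m=u_m-u$; these are cosmetic variations and both lead to the same limiting inequality and the same reduction to ``no atoms for $\B u$''.

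The genuine gap is the argument you flag yourself: the distributional inversion of the non-atomic residue does \emph{not} produce a $\tilde u$ with $\B\tilde u=w\delta_{x_0}$ when $\B$ is over-determined. Concretely, if you set $\tilde v=K_n\star\mu_{\mathrm{nac}}$ and $\tilde u=\phi u-\tilde v$, then on the Fourier side $\B K_n$ is the multiplier $P(\xi)=\B(\xi)\B^\dagger(\xi)$, the orthogonal projection onto $\mathrm{im\,}\B(\xi)$, and one computes
\[
\widehat{\B\tilde u}(\xi)=P(\xi)\,w\,\mathrm{e}^{-\imag\xi\cdot x_0},
\]
which equals $\widehat{w\delta_{x_0}}$ only if $w\in\bigcap_\xi\mathrm{im\,}\B(\xi)=\mathcal I$. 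For a canceling operator $\mathcal I=\{0\}$, so the hypothesis $w\in\mathcal I$ is precisely what you are trying to prove; the argument is circular. Equivalently, $\mu_{\mathrm{nac}}=\B(\phi u)-w\delta_{x_0}$ need not lie in the ``range'' of $\B$ (it fails the compatibility conditions exactly when $w\notin\mathcal I$), so you cannot invert it through $K_n$.

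The paper avoids this by simply citing the fact that, for elliptic and canceling $\B$, the measure $\B u$ cannot charge points; this follows from the co-canceling estimate \cite[Thm.~1.4]{VS} (together with \cite[Prop.~4.2]{VS}), or from \cite{RW}, or from \cite{ARDPHR}. Once you replace your third paragraph by such a citation, your proof goes through and is essentially identical to the paper's. If you want a self-contained route, the cleanest is to observe that there is a constant-coefficient operator $\mathcal A$ with $\mathcal A\circ\B=0$ and $\bigcap_\xi\ker\mathcal A(\xi)=\mathcal I$; then $\mathcal A(\B u)=0$, and for canceling $\B$ the operator $\mathcal A$ is co-canceling, which forbids Dirac masses in its kernel.
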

	This follows by an adaptation of \cite[Prop.~3.7]{RS} {originating in} the concentration compactness principle from \cite{PL^2}. Of course, from our point of view, this represents the subcritical case; one of the main results of this work is to prove that the same holds if $j=n\leq k$, which we restrict to $j=n=k$ for simplicity. As will become transparent for the reader, the ideas employed in the proof of Lemma~\ref{lem:BVB_strict} cannot be extended in the limit case of Theorem~\ref{thm:strict}.
	
	To complete the proof of Lemma~\ref{lem:BVB_strict}, in contrast to \cite[Prop.~3.7]{RS}, we also need to deal with possible concentrations at infinity. To this end, we will use a variant of Prokhorov's theorem, which can be inferred from \cite[Thm.~1.208,~Prop.~1.206]{FL_book} and the paragraph following Definition~\ref{def:strict}:
	\begin{lemma}\label{lem:prokhorov}
		Let $\mu,\,\mu_j\in\mathcal{M}(\R^n,[0,\infty))$ be such that $\mu_j\wstar\mu$ in $\mathcal{M}(\R^n)$ and $\mu_j(\R^n)\rightarrow\mu(\R^n)$. Then the sequence $(\mu_j)_j$ is \emph{tight}, i.e., for each $\varepsilon>0$, there exists a sufficiently large compact set $K\subset\R^n$ such that $\mu_j(\R^n\setminus K)\leq \varepsilon$.
		
		Let $\B$ be as in \eqref{eq:B} and suppose that $u_j\rightarrow u$ strictly in $\bv^\B(\R^n)$. Then the sequence $(|\B u_j|)_j$ is tight.
	\end{lemma}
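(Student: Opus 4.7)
The plan is to prove the first statement by a direct tightness argument exploiting inner regularity of the limit measure together with the mass-convergence hypothesis, and then reduce the second statement to the first via Reshetnyak's continuity theorem.

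For the first statement, I would fix $\varepsilon>0$ and, by inner regularity of the finite Borel measure $\mu$, choose a compact $K_0\subset\R^n$ with $\mu(\R^n\setminus K_0)<\varepsilon/2$. Picking a cutoff $\phi\in\Cc(\R^n)$ with $0\leq\phi\leq 1$ and $\phi\equiv 1$ on $K_0$, and setting $K\coloneqq\spt\phi$, the weak-* convergence applied to the admissible test $\phi$ combined with the assumption $\mu_j(\R^n)\to\mu(\R^n)$ yields
\[
\mu_j(\R^n\setminus K)\leq \mu_j(\R^n)-\int_{\R^n}\phi\,\dif\mu_j \longrightarrow \mu(\R^n)-\int_{\R^n}\phi\,\dif\mu\leq \mu(\R^n\setminus K_0)<\varepsilon/2.
\]
Hence $\mu_j(\R^n\setminus K)<\varepsilon$ for every $j$ past some threshold $J$. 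The remaining finitely many $\mu_j$ with $j<J$ are individually tight as single finite Borel measures on $\R^n$, so enlarging $K$ to absorb their excess mass gives the desired uniform bound.

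For the second statement, I would invoke the observation recorded immediately after Definition~\ref{def:strict}: if $u_j\to u$ $\B$-strictly, then $|\B u_j|\wstar|\B u|$ via Reshetnyak's continuity theorem \cite[Thm.~2.39]{AFP_book} (applied with the norm on $W$ as the positively $1$-homogeneous integrand; the hypothesis on the total variations is what licenses this application), while $|\B u_j|(\R^n)\to|\B u|(\R^n)$ is built into the definition of strict convergence. Both sequences consist of finite nonnegative measures on $\R^n$, so the first statement applies directly to $\mu_j=|\B u_j|$ and $\mu=|\B u|$.

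The only delicate point is that $\mathbf{1}_{\R^n}$ is not a legitimate test function for weak-* convergence against $\Cc(\R^n)$, so tightness cannot be read off from $\mu_j\wstar\mu$ alone; the role of the hypothesis $\mu_j(\R^n)\to\mu(\R^n)$ is precisely to bridge this gap by converting pointwise-in-$\phi$ weak-* testing into a uniform mass bound outside a compact set. Every other step is routine.
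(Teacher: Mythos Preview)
Your argument is correct. The paper does not actually prove this lemma but simply defers to \cite[Thm.~1.208, Prop.~1.206]{FL_book} together with the remarks following Definition~\ref{def:strict}; your write-up supplies precisely those details in a self-contained way. The first part is the standard tightness argument (inner regularity plus a cutoff to leverage the mass hypothesis against weak-* testing), and your reduction of the second part to the first via Reshetnyak's theorem is exactly the mechanism the paper invokes after Definition~\ref{def:strict}. There is nothing to correct.
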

	\begin{proof}[Proof of Lemma~\ref{lem:BVB_strict}]
		Suppose that $u_m \rightarrow u$ $\B$-strictly. We need to show that $D^{k-j}u_m \to D^{k-j}u$ in $\lebe^\frac{n}{n-j}$ for each $j=1,\ldots, \min\{k,n-1\}$. By the Vitali convergence theorem \cite[Thm.~2.24]{FL_book}, we know that the claim is equivalent with:
		\begin{enumerate}
			\item $D^{k-j}u_m\rightarrow D^{k-j}u$ in measure;
			\item\label{itm:UI} $\{D^{k-j}u_m\}_m$ is $\frac{n}{n-j}$-uniformly integrable;
			\item\label{itm:conc_infty} for every $\varepsilon>0$ there exists a Borel set $E$ with $\mathscr{L}^n(E)<\infty$ such that $$\sup_m \int_{\R^n\setminus E}|D^{k-j}u_m|^{n/(n-j)}\leq \varepsilon.$$
		\end{enumerate}
	
		Convergence in measure follows from the assumed $\lebe^1$-convergence of $(D^{k-j}u_m)_m$ and Vitali's Theorem.
		
		We next prove \ref{itm:conc_infty}. Let $\varepsilon>0$. By Lemma~\ref{lem:prokhorov}, we have that there exists large ball $B$ such that, for all $m$, $|\B u_m|(\R^n\setminus  B)\leq \varepsilon$. Let $s$ be such that $\int_{\R^n}|D^{k-l}u_m-D^{k-l}u|\dif x\leq \varepsilon$ for $m\geq s$ and $l=1,\ldots, k$. By possibly enlarging $B$, we can assume that $\int_{\R^n\setminus B}|D^{k-l}u|\dif x\leq \varepsilon$ and $\int_{\R^n\setminus B}|D^{k-l}u_m|\dif x\leq \varepsilon$ for $m<s$ and $l=1,\ldots,k$. Consider now a larger ball $\tilde B\Supset B$ and a function $\varphi \in\hold^\infty(\R^n,[0,1])$ that equals $1$ in $\tilde B$, equals $0$ in $B$ and satisfies $\|D^l\rho\|_{\lebe^\infty}\leq c$ for $l=0,\ldots k$. We estimate:
		\begin{align*}
		\|D^{k-j}u_m\|_{\lebe^{\frac{n}{n-j}}(\R^n\setminus \tilde B)}&\leq \|D^{k-j}(\varphi u_m)\|_{\lebe^{\frac{n}{n-j}}(\R^n)}\leq c|\B(\varphi u_m)|(\R^n)\\
		&\leq c\left(|\varphi\B u_m|(\R^n)+\sum_{l=1}^k \int_{\R^n}|D^l\varphi||D^{k-l}u_m|\dif x\right)\\
		&\leq c\left( |\B u_m|(\R^n\setminus B)+\sum_{l=1}^k\int_{\tilde B\setminus B}|D^{k-l}u_m|\dif x\right).
		\end{align*}
		Therefore, if $m<s$, we directly obtain that $\|D^{k-j}u_m\|_{\lebe^{\frac{n}{n-j}}(\R^n\setminus \tilde B)}\leq c\varepsilon$. If $m\geq s$, we estimate further
		\begin{align*}
		\|D^{k-j}u_m\|_{\lebe^{\frac{n}{n-j}}(\R^n\setminus \tilde B)}&\leq c\left(\varepsilon + \sum_{l=1}^k\int_{\tilde B\setminus B}|D^{k-l}u_m-D^{k-l}u|+|D^{k-l}u|\dif x\right)\leq c\varepsilon.
		\end{align*}
		The proof of \ref{itm:conc_infty} is complete.
		
		Assume now for contradiction that the claim fails; equivalently, \ref{itm:UI} fails. By strict convergence and Lemmas~\ref{lem:WB1_ctnity} and \ref{lem:BVB_density} we have that $\{D^{k-j}(u_m - u)\}_m$ is bounded in $\lebe^{\frac{n}{n-j}}(\R^n)$ and also that $\sup_m |\B u_m - \B u|(\R^n) < \infty$, so by weak-$*$ compactness, up to a subsequence (not re-labelled) we obtain
		\begin{equation}\label{w_star}|D^{k-j}(u_m - u)|^{\frac{n}{n-j}} \wstar \mu, \quad |\B u_m - \B u| \wstar \nu \quad \text{in} \ \mathcal{M}(\R^n).
		\end{equation}
		We claim that $\mu(\R^n) > 0$. By our assumption, $D^{k-j}u_m \nrightarrow D^{k-j}u$ in $\lebe^{\frac{n}{n-j}}$, so we can extract a subsequence (not re-labelled) such that
		\begin{equation}\label{not_conv}\lVert D^{k-j}(u_m - u) \rVert_{\lebe^{\frac{n}{n-j}}} \geq c
		\end{equation}
		for some $c>0$. We infer from \ref{itm:conc_infty} and $\frac{n}{n-j}$-integrability of $u$ that the sequence of measures $\{|D^{k-j}(u_m - u)|^{\frac{n}{n-j}}\}_m$ is tight, so the lower bound \eqref{not_conv} should still hold when we restrict to some sufficiently large closed ball $\bar B_R(0)$. Thus, by weakly-* upper semi-continuity on compact sets (\cite[Prop.~1.203(ii)]{FL_book}), we indeed have
		\[\mu(\bar B_R(0)) \geq c > 0.
		\]
		Now for any $\rho \in \hold^\infty_c(\R^n,[0,1])$, we also have
		\begin{align}\label{RS_trunc_est}\nonumber\|D^{k-j}(\rho u_m) \|_{\lebe^{\frac{n}{n-j}}} &\leq c |\B (\rho u_j)|(\R^n) \\
		&\leq c \left(\int \rho \dif|\B u_m| + \sum_{l=1}^k \int |D^l\rho||D^{k-l}u_m|\dif x\right).
		\end{align}
		Strict convergence combined with Reshetnyak's Continuity Theorem \cite[Thm~2.39]{AFP_book} allow us to pass to the limit on the right-hand side. To deal with the left-hand side, we raise both sides to the power of $\frac{n}{n-j}$ and use the Brezis-Lieb Lemma \cite{BL} (up to subsequence), exactly as in \cite[Prop.~3.7]{RS}. We thus have
		\begin{align}\label{RS_limiting_est}\nonumber\|D^{k-j}(\rho u) \|_{\lebe^{\frac{n}{n-j}}}^{\frac{n}{n-j}} &+ \int |\rho|^{\frac{n}{n-j}} \dif\mu \\
		&\leq c \left(\int |\rho|\dif|\B u| + \sum_{l=1}^k \int |D^l\rho||D^{k-l}u|\dif x\right)^{\frac{n}{n-j}}.
		\end{align}
		Now suppose $\mu = a_0\delta_{x_0}$, with $a_0 > 0$ and $x_0 \in \R^n$. Taking $\rho(x) = \eta\big(\frac{x-x_0}{\varepsilon}\big)$, where $\eta \in \hold^\infty_c(B_1(0),[0,1])$ with $\eta(0) = 1$, and letting $\varepsilon \searrow 0$, we get
		\[0< a_0 < |\B u|(\{x_0\})^{\frac{n}{n-j}}\]
		However, since $\B$ is elliptic and canceling, we know that $\B$ cannot charge points, so we reach a contradiction. This can be seen directly from the co-canceling estimate \cite[Thm.~1.4]{VS} (in conjunction with \cite[Prop.~4.2]{VS}), or from \cite[Thm.~3]{RW}, or from the recent paper \cite{ARDPHR}.
		
		We will now show that the positive measure $\mu\not\equiv0$ can only be a countable linear combination of Diracs, which gives us the contradiction we desire by the above argument, since at least one of these Diracs must have a positive weight attached to it. Consider the estimate \eqref{RS_trunc_est} with $u_m$ replaced by $u_m - u$. For any Borel set $E \subset \R^n$, choose $\rho$ to approximate $\mathbbm{1}_E$ and take $m \to \infty$, using the $\sobo^{k-1,1}$-convergence of $u_m$ to $u$, the Brezis-Lieb Lemma again, and \eqref{w_star}. This tells us that
		\[\frac{\mu(E)}{\nu(E)} \leq c \nu(E)^{\frac{n}{n-j}-1},
		\]
		which in turn implies, combined with the Besicovitch Derivation Theorem, that $\frac{d\mu}{d\nu} = 0$ other than on atoms on $\nu$. Thus, since $\mu \ll \nu$, we deduce that $\mu$ must be purely atomic and the result follows. Further details can be found in \cite{RS}. The contradiction reached implies that \ref{itm:UI} holds as well, hence the proof of the Lemma is complete. 
	\end{proof}

	\section{Canceling operators}\label{sec:canc}
	In this Section, we prove Theorem~\ref{thm:main}, split between Proposition~\ref{prop:nec_canc} (necessity) and Proposition~\ref{prop:EC_subset_C} (sufficiency). The second main result on elliptic and canceling operators, Theorem~\ref{thm:strict}, will be recast as Proposition~\ref{prop:strict}.
 	
We begin by noting that for $n=1$, first order elliptic operators on $\R$ are of the form $\B(\xi)=\xi M$, where $M\in\lin(V,W)$ is a matrix with $\ker M=0$. Such operators are easily seen to always be weakly canceling but never canceling.

Restricting to $n\geq2$, we first prove that cancellation is necessary for continuity:
\begin{proposition}\label{prop:nec_canc}
	Let $\B$ be an elliptic operator of order $n$ on $\R^n$. Suppose that $\bv^\B_{\locc}(\R^n)\subset\hold(\R^n,V)$. Then $\B$ is canceling.
\end{proposition}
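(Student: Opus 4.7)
My plan is to argue by contrapositive: assuming that $\B$ is non-canceling, I will construct an explicit $u\in\bv^\B_{\locc}(\R^n)$ that admits no continuous representative. Non-cancellation means $\mathcal{I}\neq\{0\}$, so I fix a nonzero $w\in\mathcal{I}$. Motivated by Lemma~\ref{lem:conv} (read in the sense of fundamental solutions, as in the proof of Lemma~2.5 of \cite{Rdiff}) and by the identification of $\mathcal{I}$ in Lemma~\ref{lem:char(w)canc}(b), the natural candidate, using $k=n$, is
\[
u(x)\coloneqq K_n(x)\,w \;=\; H_0(x)\,w\;+\;\log|x|\,\mathbf{L}w,
\]
where $\mathbf{L}\in\lin(W,V)$ and $H_0\in\hold^\infty(\R^n\setminus\{0\},\lin(W,V))$ is $0$-homogeneous (both statements use that $k=n$, so $V\odot^{k-n}\R^n = V$).

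Next I would check that $u\in\bv^\B_{\locc}(\R^n)$. The distributional identity $\B u=\delta_0 w$ gives $\B u\in\mathcal{M}_{\locc}(\R^n,W)$ immediately; for the $\sobo^{n-1,1}_{\locc}$-regularity, each derivative $D^j u$ with $j\leq n-1$ is, up to lower-order logarithmic contributions, a $(-j)$-homogeneous smooth function on $\R^n\setminus\{0\}$, and such functions are locally integrable on $\R^n$ because $\int_0^1 r^{-j}\,r^{n-1}\dif r<\infty$ whenever $j<n$.

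The heart of the argument is to rule out the existence of a continuous representative of $u$, for which I would split into two cases. If $\mathbf{L}w\neq 0$, then $H_0(\cdot)w$ is uniformly bounded on $\R^n\setminus\{0\}$ (by its $0$-homogeneity and smoothness on the compact sphere) while $\log|x|\,\mathbf{L}w$ diverges as $x\to 0$, so $u$ is unbounded near the origin. If $\mathbf{L}w=0$, then $u=H_0(\cdot)w$ is $0$-homogeneous. Since $\B$ annihilates constants but $\B u=\delta_0 w\neq 0$, the function $H_0(\cdot)w$ cannot be equivalent to a constant; hence it takes distinct values along at least two rays to the origin, and $\lim_{x\to 0}u(x)$ fails to exist. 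In either case $u$ admits no continuous representative, contradicting the hypothesized inclusion $\bv^\B_{\locc}(\R^n)\subset\hold(\R^n,V)$.

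The step requiring the most care is the justification that $u=K_n w$ is genuinely a distributional solution of $\B u=\delta_0 w$, since Lemma~\ref{lem:conv} is formulated only for smooth compactly supported inputs; this is handled by the fundamental-solution argument underlying Lemma~2.5 of \cite{Rdiff}. The remainder of the argument reduces to routine homogeneity bookkeeping.
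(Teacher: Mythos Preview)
Your proposal is correct and follows essentially the same approach as the paper's proof: both set $u=K_n w$ for $w\in\mathcal{I}$, verify $u\in\bv^\B_{\locc}$, and then exploit the decomposition $u=H_0(\cdot)w+\log|\cdot|\,\mathbf{L}w$ to analyze (dis)continuity at the origin via the two cases $\mathbf{L}w\neq0$ (unbounded) and $\mathbf{L}w=0$ ($0$-homogeneous, nonconstant). The only cosmetic difference is that you phrase it as a contrapositive while the paper argues directly that any $w$ solving $\B u=\delta_0 w$ must vanish.
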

\begin{proof}
	Let $v\in\mathscr{D}^\prime(\R^n,V)$, $w\in W$ be such that $\B v=\delta_0w$. Let $u=K_n\star\B v$ for $K_n$ as in Lemma~\ref{lem:conv}, so that $\B u=\delta_0w$ and $D^{n-j}u\in\lebe^{n/(n-j)}_\mathrm{w}$ for $j=0,\ldots,n$ (for details on these facts see \cite[Lem.~2.5,~Sec.~4,~Sec.~7]{Rdiff}). Here we denote the weak-$\lebe^p$ spaces by $\lebe^p_\mathrm{w}$. In particular, $D^{n-j}u\in\lebe^1_{\locc}$ for $j=1,\ldots,n$, so that $u\in\bv^\B_{\locc}$. We then have that
	\begin{align}\label{eq:u_bad}
	u=H_0w+\log|\cdot|\mathbf{L}w,
	\end{align}
	where, recall, $H_0$ is $0$-homogeneous and smooth away from zero and $\mathbf{L}$ is a linear map that depends on $\B$ only. In particular, since $u$ is locally bounded, we have that $\mathbf{L}w=0$. In this case, we have that $u=H_0w$ is $0$-homogeneous and continuous (at zero). It follows that $u$ is constant, so that $\delta_0w=\B u=0$. We can conclude by Lemma~\ref{lem:char(w)canc}.
\end{proof}

\begin{lemma}\label{lem:nec_strict}
	Let $\B$ be an elliptic and weakly canceling operator of order $n$ on $\R^n$. Suppose that the inclusion $\bv^\B_{\locc}(\R^n)\subset\lebe_{\locc}^\infty(\R^n,V)$ is $\B$-strictly continuous. Then $\B$ is canceling.
\end{lemma}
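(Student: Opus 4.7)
The plan is to argue by contrapositive: assume $\B$ is elliptic, weakly canceling, but \emph{not} canceling, and exhibit a sequence in $\bv^\B_\locc(\R^n)$ converging $\B$-strictly but not in $\lebe^\infty_\locc(\R^n,V)$. The sequence will be the mollification of the same distributional solution used in the proof of Proposition~\ref{prop:nec_canc}.

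First I would pick $w\in\mathcal{I}\setminus\{0\}$, which is available since $\B$ is not canceling, and set $u\coloneqq K_n\star(\delta_0 w)$. By Lemma~\ref{lem:conv} combined with weak cancellation ($\mathbf{L}w=0$), this reduces to $u=H_0 w$, a $0$-homogeneous map that is smooth on $\R^n\setminus\{0\}$. The homogeneity bounds $|D^l u(x)|\lesssim |x|^{-l}$ for $l=0,\ldots,n-1$ give $u\in\bv^\B_\locc(\R^n)$ with $\B u=\delta_0 w$; crucially, $H_0 w$ cannot be constant, since otherwise $\B u=0\neq\delta_0 w$, so $u$ has a genuine angular jump at the origin and admits no continuous representative on any neighborhood of $0$.

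Next, I would take the mollified sequence $u_\varepsilon\coloneqq u\star\eta_\varepsilon$ on any bounded open $\Omega\ni 0$. Standard mollification gives $D^l u_\varepsilon\to D^l u$ in $\lebe^1(\Omega)$ for $l=0,\ldots,n-1$, while $\B u_\varepsilon=(\B u)\star\eta_\varepsilon=w\eta_\varepsilon$ yields $|\B u_\varepsilon|(\Omega)=|w|=|\B u|(\Omega)$ for all sufficiently small $\varepsilon$, so the convergence is $\B$-strict in $\bv^\B(\Omega)$, hence in $\bv^\B_\locc(\R^n)$. However, each $u_\varepsilon$ is continuous on $\Omega$, so any $\lebe^\infty(\Omega)$-limit would have to agree a.e.\ with a continuous function on $\Omega$, which $u=H_0 w$ does not admit. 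This contradicts the assumed strict continuity of the embedding into $\lebe^\infty_\locc$, forcing $\B$ to be canceling.

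The main subtlety---and really the whole point of the construction---is to reconcile the two convergences: mollification redistributes the Dirac mass of $\B u$ into an $\lebe^1$-bump of the same total variation, which is exactly what is needed for $\B$-strict convergence, yet cannot heal the angular jump of $H_0 w$ at the origin, since uniform limits of continuous functions must be continuous. Weak cancellation is essential only to keep $u$ itself in $\lebe^\infty_\locc$ by suppressing the logarithmic term in Lemma~\ref{lem:conv}; the failure of full cancellation is precisely what leaves behind the bounded-but-discontinuous obstruction that the $\lebe^\infty$ topology detects while the $\B$-strict topology does not.
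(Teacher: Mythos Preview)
Your proposal is correct and follows essentially the same approach as the paper: both take the fundamental solution $u=H_0w$ (the logarithmic term vanishing by weak cancellation), mollify it to obtain a $\B$-strictly convergent sequence of smooth functions, and then use that an $\lebe^\infty_{\locc}$-limit of continuous functions must have a continuous representative, which forces the $0$-homogeneous map $H_0w$ to be constant and hence $w=0$. The only difference is cosmetic: the paper argues directly while you phrase it as a contrapositive, and you spell out the verification that $|\B u_\varepsilon|(\Omega)=|w|=|\B u|(\Omega)$ more explicitly.
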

\begin{proof}
	We let $u\in\bv^\B_{\locc}$ be as in the proof of Proposition~\ref{prop:nec_canc}, hence as in \eqref{eq:u_bad}. By weak cancellation, we have that $u=H_0w$. By taking a regularization of $u$ with smooth, compactly supported kernels, we find a sequence of smooth functions that converges to $u$ $\B$-strictly. By the assumption of the lemma, we have that the regularization converges to $u$ locally uniformly, so $u$ is continuous. We conclude as in the proof of Proposition~\ref{prop:nec_canc}.
\end{proof}
\begin{proposition}\label{prop:EC_subset_C}
	Let $\B$ be an $n$-th order operator on $\R^n$ that is elliptic and canceling and $\Omega\subset\R^n$ be an open set. Then $\bv^\B(\Omega)\subset\hold(\Omega,V)$.
\end{proposition}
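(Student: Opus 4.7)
The plan is to prove continuity of $u$ at an arbitrary $x_0\in\Omega$ by establishing a quantitative modulus of continuity bound of the form
\[
\underset{B_\sigma(x_0)}{\mathrm{osc}}\,u\;\leq\; c\,|\B u|(B_{2\sigma}(x_0))\;+\;c\sum_{1\leq|\alpha|<n}\sigma^{|\alpha|}\fint_{B_{2\sigma}(x_0)}|\partial^\alpha u|\dif x,\qquad\sigma\leq\sigma_0,
\]
both of whose terms vanish as $\sigma\downarrow 0$: the first because, $\B$ being elliptic and canceling, the measure $\B u$ does not charge singletons (as used at the end of the proof of Lemma~\ref{lem:BVB_strict}, and referenced there to \cite{VS,RW,ARDPHR}), and the second because $\partial^\alpha u\in\lebe^1_\locc$ for $|\alpha|<n$.

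I would first reduce to compactly supported $\tilde u\coloneqq\varphi u\in\bv^\B(\R^n)$ via a cutoff $\varphi\in\hold_c^\infty(\Omega,[0,1])$ equal to $1$ near $x_0$; the Leibniz formula then gives $\B\tilde u=\varphi\B u+g$ with $g\in\lebe^1(\R^n,W)$ supported away from $x_0$. Approximating $\tilde u$ by $\tilde u_j\coloneqq\tilde u\star\eta_{1/j}\in\hold_c^\infty(\R^n,V)$, which converge $\B$-strictly to $\tilde u$ by Lemma~\ref{lem:BVB_density}, the core step is the following \emph{polynomial Poincar\'e-$\lebe^\infty$ inequality}: for small $\sigma$, a cutoff $\chi_\sigma\in\hold_c^\infty(B_{2\sigma}(x_0),[0,1])$ equal to $1$ on $B_\sigma(x_0)$ with $|D^\ell\chi_\sigma|\leq c\sigma^{-\ell}$, and a polynomial $P_\sigma^j\in\mathcal{P}_{<n}$ (whose coefficients are canonical $\lebe^1$-averages of $\partial^\alpha\tilde u_j$ on $B_{2\sigma}(x_0)$ realizing the usual Poincar\'e projection), apply \eqref{eq:Linfty} to $\chi_\sigma(\tilde u_j-P_\sigma^j)\in\hold_c^\infty(\R^n,V)$. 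Using $\B P_\sigma^j=0$ (since $\deg P_\sigma^j<n$) and the Leibniz formula,
\[
\|\chi_\sigma(\tilde u_j-P_\sigma^j)\|_{\lebe^\infty}\leq c\,\|\chi_\sigma\B\tilde u_j\|_{\lebe^1}+c\sum_{\ell=1}^n\sigma^{-\ell}\|D^{n-\ell}(\tilde u_j-P_\sigma^j)\|_{\lebe^1(B_{2\sigma}(x_0))},
\]
and each commutator term is absorbed into $c|\B\tilde u_j|(B_{2\sigma}(x_0))$ by rescaled Sobolev-Poincar\'e inequalities coming from the subcritical estimates \eqref{eq:VS_j} at orders $j=1,\ldots,n-1$, together with H\"older's inequality on $B_{2\sigma}(x_0)$ at scale $\sigma$.

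Letting $j\to\infty$, the coefficients of $P_\sigma^j$ converge to those of a limit polynomial $P_\sigma$ (since $\partial^\alpha\tilde u_j\to\partial^\alpha\tilde u$ in $\lebe^1$ for $|\alpha|<n$ by $\B$-strict convergence), and the right-hand side above converges by $\B$-strict convergence combined with weak-$*$ upper semicontinuity on the compact set $\overline{B_{2\sigma}(x_0)}$, yielding
\[
\|u-P_\sigma\|_{\lebe^\infty(B_\sigma(x_0))}\leq c\,|\B u|(B_{2\sigma}(x_0))
\]
for $\sigma$ small enough to exclude $\spt g$. The coefficients of $P_\sigma$ of positive order are uniformly bounded in $\sigma$ by $\lebe^1$-averages of $\partial^\alpha u$, so the oscillation of $P_\sigma$ on $B_\sigma(x_0)$ is at most $c\sum_{1\leq|\alpha|<n}\sigma^{|\alpha|}\fint_{B_{2\sigma}(x_0)}|\partial^\alpha u|\dif x$, and combining gives the displayed modulus of continuity. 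The principal obstacle is the polynomial Poincar\'e-$\lebe^\infty$ inequality above: one must identify the correct projection $P_\sigma^j$ that simultaneously absorbs commutators of all orders, and both hypotheses on $\B$ enter essentially---ellipticity through \eqref{eq:VS_j} at orders $j=1,\ldots,n-1$ handles the subcritical commutators, while cancellation through \eqref{eq:Linfty} delivers the critical $\lebe^\infty$-estimate.
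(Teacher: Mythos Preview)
Your overall framework---localize with a cutoff, apply \eqref{eq:Linfty} to the cut-off function, expand by Leibniz, and show the resulting terms vanish as $\sigma\downarrow0$---is precisely the paper's approach. The paper uses only the constant $(u)_{x,r}$ rather than a full polynomial $P_\sigma$, but that is cosmetic.

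There is, however, a genuine gap in your middle step: the claim that the commutator terms
\[
\sigma^{-\ell}\|D^{n-\ell}(\tilde u_j-P_\sigma^j)\|_{\lebe^1(B_{2\sigma}(x_0))}
\]
can be absorbed into $c\,|\B\tilde u_j|(B_{2\sigma}(x_0))$ is false in general. The estimates \eqref{eq:VS_j} are \emph{global}; localizing them to $B_{2\sigma}$ requires another cutoff, which reintroduces the same commutators, so the argument is circular. In fact, the resulting inequality $\|u-P_\sigma\|_{\lebe^\infty(B_\sigma)}\leq c\,|\B u|(B_{2\sigma})$ cannot hold for elliptic canceling operators that are not $\C$-elliptic (such operators exist, see \cite{GR}): for those, $\ker\B$ on a ball is infinite-dimensional, so one could take a smooth $u$ with $\B u=0$ near $x_0$ that is not a polynomial, contradicting the inequality. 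This is exactly the distinction the paper draws between Theorem~\ref{thm:main} (canceling) and Theorem~\ref{thm:main2} ($\C$-elliptic).

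The fix is not to absorb at all. After H\"older at scale $\sigma$, the commutator terms are bounded by $\|D^{n-\ell}u\|_{\lebe^{n/(n-\ell)}(B_{2\sigma}(x_0))}$; you then keep these as separate contributions to $\omega(\sigma;x_0)$ and observe that they tend to zero as $\sigma\downarrow0$ by dominated convergence, since $D^{n-\ell}u\in\lebe^{n/(n-\ell)}_\locc$ (which follows from \eqref{eq:VS_j} via strict density). This is what the paper does, and it also repairs your final sentence: the second term in your displayed oscillation bound does \emph{not} vanish at every $x_0$ merely from $\partial^\alpha u\in\lebe^1_\locc$ (that only works at Lebesgue points of $\partial^\alpha u$); the higher integrability $\partial^\alpha u\in\lebe^{n/|\alpha|}_\locc$ is what makes $\sigma^{|\alpha|}\fint_{B_{2\sigma}}|\partial^\alpha u|\leq c\|\partial^\alpha u\|_{\lebe^{n/|\alpha|}(B_{2\sigma})}\to0$ hold at \emph{every} point.
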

\begin{proof}
	Let $u\in\bv^\B(\Omega)$ and $x\in\Omega$. We will consider radii $r>0$ such that $B_{2r}(x)\subset\Omega$ and smooth cut-off functions $\rho_r\in\hold^\infty_c(B_r(x))$ such that $|D^j\rho_r|\leq cr^{-j}$ for $j=0,\ldots, n$. By the strict density Lemma~\ref{lem:BVB_density}, we have that the inequality \eqref{eq:Linfty} holds for $\rho_r(u-(u)_{x,r})\in\bv^\B(\R^n)$ (to carefully check this, one can employ \cite[Lem.~2.2]{Rdiff}). Here $(u)_{x,r}=\fint_{B_r(x)}u(y)\dif y$. We estimate:
	\begin{align*}
	\|u-(u)_{x,r}\|_{\lebe^\infty(B_r(x))}&\leq \|\rho_r(u-(u)_{x,r})\|_{\lebe^\infty(B_{2r}(x))}\leq c|\B \left[\rho_r(u-(u)_{x,r})\right]|(B_{2r}(x))\\
	&\leq c\left(|\B u|(B_{2r}(x))+\sum_{j=1}^{n-1}r^{-j}\|D^{n-j}u\|_{\lebe^1(B_{2r}(x))}\right.\\
	&\Bigg.+r^{-n}\|u-(u)_{x,r}\|_{\lebe^1(B_{2r}(x))}\Bigg)\\
	&\leq c\left(|\B u|(B_{2r}(x))+\sum_{j=1}^{n-1}r^{-j}\|D^{n-j}u\|_{\lebe^1(B_{2r}(x))}\right)\\
	&\leq c\left(|\B u|(B_{2r}(x))+\sum_{j=1}^{n-1}\|D^{n-j}u\|_{\lebe^{\frac{n}{n-j}}(B_{2r}(x))}\right),
	\end{align*}
	where in the third inequality we used the Leibniz rule and triangle inequality, the fourth estimate follows from Poincar\'e's inequality (see, e.g., \cite[Lem.~4.1.3]{Ziemer}), and the last estimate follows from H\"older's inequality.
	
	We next note that by the dominated convergence theorem, we have that \\ $|\B u|(B_{2r}(x))\rightarrow |\B u|(\{x_0\})$ as $r\downarrow 0$, which is null, e.g., by \cite{ARDPHR}. By \eqref{eq:VS_j} and the density lemma, we have that $D^{n-j}u\in\lebe^{n/(n-j)}_{\locc}(\Omega)$, so that the summands also tend to zero by dominated convergence theorem. In particular, we showed that
	\begin{align}\label{eq:main}
	\|u-(u)_{x,r}\|_{\lebe^\infty(B_r(x))}\leq \omega(r;x)
	\end{align}
	for some increasing $\omega(\,\cdot\,,x)\colon[0,\infty)\rightarrow[0,\infty]$ with $\lim_{r\downarrow0}\omega(r;x)=0$.
	
	We next show that for each $x$, $((u)_{x,r})_{r>0}$ converges. We know from the Lebesgue differentiation theorem that this is the case $\mathscr{L}^n$-almost everywhere, so we redefine $u$ by
	\begin{align}\label{eq:precise_rep}
	u^*(x)=\lim_{r\downarrow0} (u)_{x,r}
	\end{align}
	at all Lebesgue points $x$. By the triangle inequality, we have that
	\begin{align*}
	\|u-u^*(x)\|_{\lebe^\infty(B_r(x))}\leq \omega(r;x)+|(u)_{x,r}-u^*(x)|\rightarrow0\text{ as }r\downarrow0.
	\end{align*} 
	We assume that there exists a point $x_0\in\R^n$ which is not a Lebesgue point. We will show that $((u)_{x_0,r_m})_{m}$ is Cauchy, where $r_m=m^{-1}$. To do this, let $m\leq k$ and consider Lebesgue points $y_k\in B_{r_k}(x_0)$, so that \eqref{eq:main} implies
	\begin{align*}
	|(u)_{x_0,r_m}-(u)_{x_0,r_k}|&\leq |(u)_{x_0,r_m}-u(y_k)|+|u(y_k)-(u)_{x_0,r_k}|\\
	&\leq \omega(r_m;x_0)+\omega(r_k;x_0),
	\end{align*}
	which converges to zero as $m,\,k\rightarrow\infty$. Write $u^*(x_0)\coloneqq \lim_{m\rightarrow\infty}(u)_{x_0,r_m}$. The same estimation gives
	\begin{align*}
	|(u)_{x_0,r}-(u)_{x_0,r_m}|\leq 2\omega(x_0;r)\quad\text{ if }0<r_m<r,
	\end{align*}
	so that letting $m\rightarrow\infty$, we obtain that  $\lim_{r\downarrow0}(u)_{x_0,r}=u^*(x_0)$. 
	For Lebesgue points $x\in B_r(x_0)$ of $u$, we then have that
	\begin{align*}
	|u(x)-u^*(x_0)|\leq |u(x)-(u)_{x_0,r}|+|(u)_{x_0,r}-u^*(x_0)|,
	\end{align*}
	which proves that
	\begin{align}\label{eq:modulus}
	\|u-u^*(x_0)\|_{\lebe^\infty(B_r(x_0))}\leq 3\omega(r;x_0)
	\end{align}
	which is a contradiction. Hence all $x$ are Lebesgue points of $u$, and, moreover, $u$ (identified with $u^*$, hence defined \emph{pointwise everywhere} by \eqref{eq:precise_rep}) is $\lebe^\infty$-continuous everywhere. It is not difficult to see from the proof above that the inequality \eqref{eq:modulus} holds at every $x_0$, hence clearly implies continuity of $u$.
\end{proof}
The strict continuity result of Theorem~\ref{thm:main} will follow as a consequence of the following:
\begin{lemma}\label{lem:my_first_MT_lemma}
	Let $Q\subset\R^n$ be a cube and $\mu_j\in\mathcal{M}(\bar Q)$ be a sequence of positive measures such that $\mu_j\wstar \mu$ in $\mathcal{M}(\bar Q)$, where $\mu_j,\,\mu$ are non-atomic. Then for each $\varepsilon>0$ there exists $\delta>0$ such that for each sub-cube $\tilde Q\subset Q$ with $|\tilde Q|<\delta$, we have that $\sup_j \mu_j(\tilde Q)<\varepsilon$ (all cubes considered are closed and have faces paralled to the coordinate hyperplanes).
\end{lemma}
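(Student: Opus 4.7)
The plan is to argue by contradiction: if the conclusion fails, I would extract closed sub-cubes $\tilde Q_k \subset \bar Q$ and indices $j_k$ with $|\tilde Q_k|\to 0$ and $\mu_{j_k}(\tilde Q_k)\geq \varepsilon/2$. Because the diameters of the $\tilde Q_k$ shrink to $0$, their centers lie in the compact set $\bar Q$, so after passing to a subsequence they converge to some $x_0 \in \bar Q$. For every fixed $\rho > 0$ we then have $\tilde Q_k \subset \bar Q_\rho(x_0)$ for all large $k$, hence $\mu_{j_k}(\bar Q_\rho(x_0))\geq \varepsilon/2$.

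Next I would split on whether $(j_k)_k$ is bounded or unbounded. If a constant index $j^*$ appears infinitely often, then $\mu_{j^*}(\bar Q_\rho(x_0))\geq \varepsilon/2$ for every $\rho>0$, so by continuity of the finite measure $\mu_{j^*}$ from above, $\mu_{j^*}(\{x_0\})\geq \varepsilon/2$, contradicting that $\mu_{j^*}$ is non-atomic. Otherwise I pass to a subsequence with $j_k\to\infty$ and use the fact that weak-$*$ convergence in $\mathcal{M}(\bar Q)$ (duality with $C(\bar Q)$) entails convergence of total masses, $\mu_j(\bar Q)\to\mu(\bar Q)$, obtained by testing against the constant function $1$.

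Given this setup on the compact metric space $\bar Q$, the standard Portmanteau statement yields the upper-semicontinuity bound $\limsup_k \mu_{j_k}(F) \leq \mu(F)$ on closed sets $F\subset\bar Q$; one can prove this quickly by sandwiching $\chi_F$ below a Urysohn-type continuous cutoff $\varphi\in C(\bar Q)$ supported in a chosen open neighborhood of $F$, testing weak-$*$ convergence against $\varphi$, and invoking outer regularity of $\mu$ to approximate $\mu(F)$. Applied with $F = \bar Q_\rho(x_0)$ this gives $\mu(\bar Q_\rho(x_0))\geq \varepsilon/2$, and letting $\rho\downarrow 0$ (continuity from above) yields $\mu(\{x_0\})\geq \varepsilon/2$, contradicting that $\mu$ is non-atomic.

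The argument is really just a compactness/tightness one and I do not anticipate a genuine obstacle. The only subtle point is the Portmanteau upper-semicontinuity on closed sets, which is valid here precisely because $\bar Q$ is compact so masses cannot leak. Note that both hypotheses—non-atomicity of each $\mu_j$ and non-atomicity of $\mu$—are genuinely used, one in each of the two case branches above.
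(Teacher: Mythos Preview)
Your proposal is correct and follows essentially the same approach as the paper's proof: contradiction, extraction of shrinking cubes with mass at least $\varepsilon$, compactness of $\bar Q$ to get a limit point $x_0$, a case split on whether the index sequence is bounded or not, and in each case upper-semicontinuity on closed sets (Portmanteau / \cite[Prop.~1.203(ii)]{FL_book}) together with continuity from above to produce an atom at $x_0$. The only differences are cosmetic (order of the two cases, your explicit justification of Portmanteau via the compactness of $\bar Q$ and testing against $1$).
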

\begin{proof}
	Assume for contradiction that there exist $\varepsilon>0$, a sub-sequence $\mu_{j_i}$, and a sequence of cubes $\bar Q(x_i,r_i)$ such that $r_i\downarrow 0$ as $i\rightarrow\infty$ and $\mu_{j_i}(\bar Q(x_i,r_i))\geq \varepsilon$. If $(j_i)_i$ has a subsequence increasing to infinity, then we can, by passing to another subsequence (neither relabelled), assume that $x_i\rightarrow x\in\bar Q$. For given (large) $l$ and sufficiently large $i$, we have that $\bar{Q}(x_i,r_i)\subset \bar{Q}(x,l^{-1})$, so that $\mu_{j_i}(\bar{Q}(x,l^{-1}))\geq\varepsilon$. By \cite[Prop.~1.203(ii)]{FL_book}, we have that $\mu(\bar{Q}(x,l^{-1}))\geq \varepsilon$. Letting $l\rightarrow\infty$, we have by the dominated convergence theorem that $\mu(\{x\})\geq \varepsilon>0$, which leads to a contradiction.
	
	If, on the other hand, we have that $(j_i)_i$ has no sub-sequence increasing to infinity, then it is bounded, and, in particular, must have a stationary subsequence, not-relabelled, $j_i=J$. Then we can repeat the argument above after replacing $\mu_{j_i}=\mu_J=\mu$. The proof is complete.
\end{proof}
\begin{proposition}\label{prop:strict}
	Let $\B$ be an $n$-th order elliptic and canceling operator on $\R^n$. If $u_j,\,u\in\bv^\B(\R^n)$ are such that $u_j$ converge to $u$ $\B$-strictly, then $u_j$ converge to $u$ uniformly.
\end{proposition}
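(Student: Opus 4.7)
The plan is to extract from the proof of Proposition~\ref{prop:EC_subset_C} a pointwise modulus-of-continuity estimate and to show it is uniform across the sequence $(u_j)$. Applying the argument there to each $u_j$ yields
\begin{equation*}
\|u_j-(u_j)_{x,r}\|_{\lebe^\infty(B_r(x))}\leq c\,\omega_j(r;x),\qquad \omega_j(r;x)\coloneqq |\B u_j|(B_{2r}(x))+\sum_{l=1}^{n-1}\|D^{n-l}u_j\|_{\lebe^{n/(n-l)}(B_{2r}(x))}.
\end{equation*}
If I can show that $\omega(r)\coloneqq \sup_{j,\,x\in\R^n}\omega_j(r;x)\to 0$ as $r\downarrow 0$, this provides a common modulus of continuity for $(u_j)$ and the conclusion will follow by a soft argument.

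\textbf{Uniform modulus; main obstacle.} Strict convergence and Reshetnyak's continuity theorem give $|\B u_j|\wstar|\B u|$ strictly in $\mathcal{M}(\R^n)$, while ellipticity together with cancellation ensures that none of the measures $|\B u_j|$, $|\B u|$ charges any point (see \cite{ARDPHR}). The chief difficulty is passing from the fixed-cube statement of Lemma~\ref{lem:my_first_MT_lemma} to a uniform-in-$x$ bound on all of $\R^n$, since concentration could a priori escape to infinity as $x$ varies. This is resolved by a two-step reduction: Lemma~\ref{lem:prokhorov} first confines essentially all of the $|\B u_j|$ mass to a large cube $Q$ (the case of $x$ far from $Q$ being handled by tightness alone), and Lemma~\ref{lem:my_first_MT_lemma} applied on a slightly enlarged cube then produces the required smallness of $|\B u_j|(B_{2r}(x))$ uniformly in $(j,x)$ for $r$ small. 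For the lower-order summands, Lemma~\ref{lem:BVB_strict} delivers strong convergence $D^{n-l}u_j\to D^{n-l}u$ in $\lebe^{n/(n-l)}(\R^n)$ for $l=1,\dots,n-1$; this automatically yields equi-integrability and tightness of $|D^{n-l}u_j|^{n/(n-l)}\mathscr{L}^n$, and hence the analogous uniform control on the second summand.

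\textbf{From uniform modulus to uniform convergence.} Once $(u_j)$ is equi-continuous on $\R^n$, the $\lebe^1(\R^n)$-convergence implicit in the $\sobo^{n-1,1}$-convergence yields $(u_j)_{x,r}\to(u)_{x,r}$ as $j\to\infty$, uniformly in $x$ for any fixed $r>0$. Writing
\begin{equation*}
|u_j(x)-u(x)|\leq |u_j(x)-(u_j)_{x,r}|+|(u_j)_{x,r}-(u)_{x,r}|+|(u)_{x,r}-u(x)|\leq 2c\,\omega(r)+|(u_j)_{x,r}-(u)_{x,r}|,
\end{equation*}
one lets $j\to\infty$ to kill the middle term and then $r\downarrow 0$ to kill $\omega(r)$, obtaining $\|u_j-u\|_{\lebe^\infty(\R^n)}\to 0$, as desired.
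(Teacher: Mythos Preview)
Your proposal is correct and rests on the same technical core as the paper---the modulus-of-continuity estimate from Proposition~\ref{prop:EC_subset_C}, tightness via Lemma~\ref{lem:prokhorov}, the non-concentration Lemma~\ref{lem:my_first_MT_lemma}, and strong convergence of the intermediate derivatives from Lemma~\ref{lem:BVB_strict}---but the way you assemble these into the conclusion is genuinely different and somewhat cleaner. The paper localizes to a large cube via a cutoff, invokes Arzel\`a--Ascoli to extract a uniformly convergent subsequence on that cube, treats the exterior separately through a second cutoff and the inequality~\eqref{eq:Linfty}, and then concludes by uniqueness of the cluster point in the strict metric. You instead establish a \emph{global} uniform modulus $\omega(r)$ directly (tightness handling $x$ far out, Lemma~\ref{lem:my_first_MT_lemma} on an enlarged cube handling $x$ in the bulk), and then finish with the three-term split together with the elementary bound $|(u_j)_{x,r}-(u)_{x,r}|\leq |B_r|^{-1}\|u_j-u\|_{\lebe^1(\R^n)}$, which is uniform in $x$ for fixed $r$. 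This bypasses Arzel\`a--Ascoli, subsequences, and the inside/outside dichotomy entirely. One small technical point you leave implicit: to apply Lemma~\ref{lem:my_first_MT_lemma} you need $|\B u_j|\mres \bar Q'\wstar|\B u|\mres\bar Q'$ in $\mathcal M(\bar Q')$ on the enlarged cube, which follows from strict convergence once $Q'$ is chosen with $|\B u|(\partial Q')=0$ (or, as the paper does, by multiplying through a cutoff and checking that $\rho u_j\to\rho u$ $\B$-strictly).
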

\begin{proof}
	Let $\varepsilon > 0$. By Lemmas~\ref{lem:WB1_ctnity},~\ref{lem:BVB_density}, and \ref{lem:prokhorov}, arguing as in the proof of Lemma~\ref{lem:BVB_strict}, we have tightness of $\{|\B (u_j - u)|\}_j$ and $\{|D^{n-l}(u_j - u)|\}_j$, $l=1,\ldots, n$. Thus we can choose a sufficiently large cube $Q$ such that the mass of both of these is uniformly (in $j$) less than $\varepsilon$ outside $\bar Q$.
	We aim to use the Ascoli-Arzela Theorem in $C(\overline{2Q},V)$, where $2Q$ is a concentric cube to $Q$, of twice the side length. For this purpose, we note that $u_j$ are uniformly bounded by \eqref{eq:Linfty}. 
	
	We next argue that the family $\{u_j\}_j$ is equi-continuous in $\overline{2Q}$. Let $\eta>0$. Take a smooth cut-off function $\rho\in\hold^\infty_c(4Q)$ such that $\rho=1$ in $\overline{2Q}$. It is then elementary to show by use of the product rule that $\rho u_j \rightarrow \rho u$ $\B$-strictly. We have that $u_j$ are continuous by Proposition~\ref{prop:EC_subset_C}, so we can infer from \eqref{eq:main} that
	\begin{align}\label{eq:eqc}
	|u_j(x)-u_j(y)|&\leq c\left(|\B u_j|(Q_{2r})+\sum_{l=1}^{n-1}\|D^{n-l} u_j\|_{\lebe^{\frac{n}{n-l}}(Q_{2r})}\right)
	\end{align}
	whenever $x,\,y$ lie in the same cube $Q_r$ of side-length $r$ such that $Q_{2r}\Subset 2Q$ ($Q_r$ and $Q_{2r}$ are assumed concentric). By Proposition~\ref{lem:BVB_strict} and Vitali's Theorem we have that $\{D^{n-l}(\rho u_j)\}_j$ are $\frac{n}{n-l}$-uniformly integrable in $4Q$, and thus $\{D^{n-l}u_j\}_j$ are $\frac{n}{n-l}$-uniformly integrable in $2Q$. Hence there exists $\delta>0$ such that if $0<r<\delta$, we have that the sum on the right hand side of \eqref{eq:eqc} is less than $\eta$. By Lemma~\ref{lem:my_first_MT_lemma}, possibly by making $\delta$ smaller, we have that $|\B (\rho u_j)|(\tilde{Q}_{2r})\leq \eta$ for every $\tilde{Q}_{2r} \subset 4Q$ independently of $j$ and of the choice of $r<\delta$. In particular, this is true if we take $\tilde{Q}_{2r}$ to be our arbitrary cube $Q_{2r} \Subset 2Q$, in which case $|\B (\rho u_j)|(Q_{2r}) = |\B u_j|(Q_{2r})$ by our choice of $\rho$.
	
	It follows that $\{u_j\restriction_{\overline{2Q}}\}_j$ is pre-compact in $\hold(\overline{2Q},V)$. Then there is a subsequence $\{u_{j_i}\restriction_{\overline{2Q}}\}_i$ which converges uniformly, say, to $\tilde u$, hence $u_{j_i}\restriction_{\overline{2Q}}\rightarrow \tilde u$ in $\lebe^1(\overline{2Q})$, {so $\tilde u=u\restriction_{\overline{2Q}}$}. Now for $\eta\in\hold^\infty(\R^n)$ such that $\eta=1$ in $\R^n \backslash 2Q$ and $\eta = 0$ in $\bar Q$, satisfying $\|D^{l}u\|_{\lebe^\infty}\leq c$ for $l=1,\ldots,n-1$ we have
	\begin{align*}
	    \| u_{j_i} - u \|_{\lebe^\infty} &\leq \| u_{j_i} - u \|_{\lebe^\infty(2Q)} + \| u_{j_i} - u \|_{\lebe^\infty(\R^n \backslash 2Q)} \\
	    &\leq \| u_{j_i} - u \|_{\lebe^\infty(2Q)} + \| \eta(u_{j_i} - u) \|_{\lebe^\infty(\R^n \backslash \bar Q)} \\
	    &\leq c\left(\| u_{j_i} - u \|_{\lebe^\infty(2Q)} + | \B[\eta(u_{j_i} - u)]|(\R^n \backslash \bar Q)\right) \\
	    &\leq c\left(\| u_{j_i} - u \|_{\lebe^\infty(2Q)} + | \B(u_{j_i} - u)|(\R^n \backslash \bar Q) +\right. \\
	    &\left.+\sum_{l=1}^{n-1}\|D^{n-l} (u_{j_i}- u)\|_{\lebe^{1}(\R^n \backslash \bar Q)}\right) \\
	    &\leq c\varepsilon
	\end{align*}
	for $i$ sufficiently large. 
	
	It follows that the sequence $\{u_j\}_j$ has a unique cluster point with respect to the strict convergence in $\bv^\B$, which is complete when equipped with the metric of strict topology. It follows that $u_j\rightarrow u$ uniformly, which concludes the proof.
\end{proof}

	\section{$\C$-elliptic operators}\label{sec:C-ell}
	We are now considering the question of continuity up to the boundary of $\bv^\B$-maps on bounded domains, for which we choose the prototypical example of a cube. 
	The reader might be tempted to think that the claim of Theorem~\ref{thm:main2} follows from Theorem~\ref{thm:main} via an extension theorem in the spirit of \cite[Thm.~1.2]{GR}. However, so far, such an extension theorem for higher order operators is only known for $\sobo^{\B,1}(Q)$ and \emph{not} for $\bv^\B(Q)$. On the other hand, our method explicitly gives a simple and quite hands on estimate on the modulus of continuity up to the boundary, as we will present in the following:
	\begin{proposition}\label{prop:suff_C-ell}
		Let $\B$ be a $\C$-elliptic $n$-th order operator on $\R^n$ and $Q\subset\R^n$ be an open cube. Then $\bv^\B(Q)\subset\hold(\bar Q,V)$.
	\end{proposition}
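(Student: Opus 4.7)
The plan is to mimic the Campanato-type argument of Proposition~\ref{prop:EC_subset_C}, except that instead of localizing via a cutoff and invoking the global canceling inequality \eqref{eq:Linfty} on $\R^n$, I would apply the local $\C$-elliptic bound from Proposition~\ref{prop:GR} directly on the ``corner rectangles''
\[
R_r(x)\coloneqq Q\cap Q_r(x),\qquad x\in\bar Q,\ 0<r\ll\diam(Q).
\]
Each such $R_r(x)$ is a rectangular box whose side lengths lie between $r$ and $2r$, so its rescaling $r^{-1}(R_r(x)-x)\subset[-1,1]^n$ belongs to a compact family of bounded Lipschitz domains of uniformly bounded Lipschitz constant. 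Consequently, applying the scaled version of Proposition~\ref{prop:GR} to $u-(u)_{R_r(x)}$ (noting $\B\cdot\mathrm{const}=0$) yields, with a constant $c$ independent of $x$ and $r$,
\begin{align*}
\|u-(u)_{R_r(x)}\|_{\lebe^\infty(R_r(x))}\leq c\Bigl(|\B u|(R_r(x))+r^{-n}\|u-(u)_{R_r(x)}\|_{\lebe^1(R_r(x))}\Bigr).
\end{align*}

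Next I would control the $\lebe^1$-term by combining Poincar\'e--Wirtinger with H\"older: since $|R_r(x)|\sim r^n$ and $R_r(x)$ is convex of diameter $\sim r$, one gets $r^{-n}\|u-(u)_{R_r(x)}\|_{\lebe^1(R_r(x))}\leq c\|Du\|_{\lebe^n(R_r(x))}$, producing the modulus
\begin{align*}
\|u-(u)_{R_r(x)}\|_{\lebe^\infty(R_r(x))}\leq\omega(r;x)\coloneqq c\Bigl(|\B u|(R_r(x))+\|Du\|_{\lebe^n(R_r(x))}\Bigr).
\end{align*}
To see that $\omega(r;x)\downarrow 0$ as $r\downarrow 0$, the first summand tends to $|\B u|(\{x\})=0$ by dominated convergence, since $\C$-ellipticity implies cancellation and canceling operators charge no points (see \cite{ARDPHR} and the discussion after Lemma~\ref{lem:char(w)canc}). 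For the second summand one needs $Du\in\lebe^n(Q)$, which I would establish by combining strict density of $\hold^\infty(\bar Q,V)$ in $\bv^\B(Q)$ (the higher-order $\C$-elliptic analogue of \cite[Lem.~4.15]{BDG} already invoked in the proof of Proposition~\ref{prop:GR}), the $\sobo^{\B,1}$-extension theorem \cite[Thm.~1.2]{GR}, and the $j=n-1$ instance of Lemma~\ref{lem:WB1_ctnity}: a strictly convergent sequence of smooth approximants $u_j\rightarrow u$ satisfies the uniform bound $\|Du_j\|_{\lebe^n(Q)}\leq c(\|\B u_j\|_{\lebe^1(Q)}+\|u_j\|_{\lebe^1(Q)})$, and weak-$\lebe^n$ compactness combined with $\sobo^{n-1,1}$-convergence identifies the weak limit as $Du$. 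Absolute continuity of the integral then gives $\|Du\|_{\lebe^n(R_r(x))}\downarrow 0$.

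With the vanishing modulus in hand, I would close the argument exactly as in Proposition~\ref{prop:EC_subset_C}: the nestedness $R_{r'}(x)\subset R_r(x)$ for $0<r'<r$ and the $\lebe^\infty$-estimate above force $|(u)_{R_{r'}(x)}-(u)_{R_r(x)}|\leq\omega(r;x)+\omega(r';x)$, routed through any Lebesgue point in $R_{r'}(x)$, so that $u^*(x)\coloneqq\lim_{r\downarrow 0}(u)_{R_r(x)}$ exists for every $x\in\bar Q$. The Lebesgue differentiation theorem identifies $u^*=u$ $\mathscr{L}^n$-a.e., and the triangle-inequality consequence $\|u-u^*(x_0)\|_{\lebe^\infty(R_r(x_0))}\leq 3\omega(r;x_0)$, exactly as in \eqref{eq:modulus}, promotes $u^*$ to the unique continuous representative of $u$ on $\bar Q$.

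The step I expect to be the main obstacle is establishing $Du\in\lebe^n(Q)$: since an extension theorem for $\bv^\B(Q)$ is not available, one cannot reduce directly to Lemma~\ref{lem:BVB_strict} on $\R^n$. The detour through smooth strict approximants, the $\sobo^{\B,1}$-extension of \cite[Thm.~1.2]{GR}, and weak $\lebe^n$-compactness forms the delicate technical bridge between Proposition~\ref{prop:GR} and the intermediate Van Schaftingen embedding needed for the modulus estimate; by comparison, verifying the uniform Lipschitz and Poincar\'e bounds on the family $\{r^{-1}(R_r(x)-x)\}_{x,r}$ (needed to absorb size- and shape-dependence of the constant in Proposition~\ref{prop:GR}) is straightforward.
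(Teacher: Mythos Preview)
Your proposal is correct and follows essentially the same route as the paper's proof: apply the scaled $\C$-elliptic $\lebe^\infty$-bound of Proposition~\ref{prop:GR} on the corner rectangles $Q\cap Q_r(x)$, reduce the zero-order term via Poincar\'e and H\"older to $\|Du\|_{\lebe^n(Q_r(x))}$, and then conclude as in Proposition~\ref{prop:EC_subset_C} once one knows $Du\in\lebe^n(Q)$ and that $\B u$ charges no points. Your treatment of the step $Du\in\lebe^n(Q)$ via strict approximation, the $\sobo^{\B,1}$-extension of \cite[Thm.~1.2]{GR}, and weak-$\lebe^n$ compactness spells out explicitly what the paper abbreviates by a reference to \cite{GR} ``arguing similarly to the proof of Proposition~\ref{prop:GR}''.
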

	\begin{proof}
		Let $u\in\bv^\B(Q)$. By Proposition~\ref{prop:EC_subset_C} and \cite[Lem.~3.2]{GR} we have that $u\in\hold(\Omega,V)$. It remains to check continuity up to the boundary. To this end, let $x_0\in\partial\Omega$. For small enough radii $r>0$, we note that  $Q(x_0,r)\cap\Omega$ is one of finitely many rectangles with proportional side lengths (fixed up to homothety). We will abuse notation and denote $Q(x_0,r)\cap\Omega$ also by $Q(x_0,r)$. One can then use Proposition~\ref{prop:GR} and a scaling argument to prove that:
		\begin{align*}
		\|u-(u)_{x,r}\|_{\lebe^\infty(Q_r(x_0))}&\leq c\left(|\B u|(Q_{r}(x_0))+{r^{-n}}\|u-(u)_{x,r}\|_{\lebe^1(Q_r(x_0))}\right)\\
		&\leq c\left(|\B u|(Q_{r}(x_0))+{r^{1-n}}\|D u\|_{\lebe^1(Q_{r}(x_0))}\right)\\
		&\leq c\left(|\B u|(Q_{r}(x_0))+\|D u\|_{\lebe^n(Q_{r}(x_0))}\right),
		\end{align*}
		where in the second estimate we used Poincar\'e's inequality and in the third estimate we used H\"older's inequality. We conclude as in the proof of Proposition~\ref{prop:EC_subset_C}, using the fact that $\bv^\B(\Omega)\subset\sobo^{1,n}(\Omega,V)$ for $\C$-elliptic operators, which follows from \cite{GR}, arguing similarly to the proof of Proposition~\ref{prop:GR}.
	\end{proof}
	\begin{remark}
	{In particular, the estimate for the modulus of continuity thus obtained is
	\begin{align*}
		|u(x)-u(y)|\leq c\left(|\B u|(Q_{r})+\|D u\|_{\lebe^n(Q_{r})}\right)\quad\text{ for }x,\,y\in Q_r\subset \bar Q,
	\end{align*}
	where $Q_r$ denotes a cube of radius $r$ with faces parallel to the coordinate axes. This estimate is a generalization of the inequality in \cite[Eq.~(1.2)]{PVS}, which was the starting point for this work. In the case of elliptic and canceling operators, we have the weaker estimate \eqref{eq:eqc}, in the sense that more terms are needed to control the oscillations and the support of the estimation is increased.}
	\end{remark} 
	However, one cannot expect to keep the support of estimation fixed in the latter case because, otherwise, one can obtain boundary estimates for elliptic and canceling operators by the proof of Proposition~\ref{prop:suff_C-ell}. This is not possible by \cite[Counterex.~3.4]{GR} and the following:
	\begin{lemma}\label{lem:nec_C-ell}
		Let $\B$ be an $n$-th order operator on $\R^n$ and $\Omega\subset\R^n$ be a bounded Lipschitz domain. Suppose that $\bv^\B(\Omega)\subset\hold(\bar\Omega,V)$. Then $\B$ is $\C$-elliptic.
	\end{lemma}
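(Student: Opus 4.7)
The plan is to prove the contrapositive: assuming $\B$ is not $\C$-elliptic, we construct an element $u\in\bv^\B(\Omega)$ that is \emph{unbounded} near some point of $\partial\Omega$, contradicting the hypothesis $\bv^\B(\Omega)\subset\hold(\bar\Omega,V)$, since every continuous function on the compact set $\bar\Omega$ is bounded. The construction parallels the necessity direction of Proposition~\ref{prop:GR} sketched in \cite[Sec.~4.2]{GR}.

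The failure of $\C$-ellipticity provides $\xi=a+\imag b\in\C^n\setminus\{0\}$, with $a,b\in\R^n$ real-linearly independent whenever $\B$ is elliptic, and a nonzero $v\in V_\C$ with $\B(\xi)v=0$. Fix $x_0\in\partial\Omega$ such that the real codimension-$2$ affine subspace $N\coloneqq\{x\in\R^n\colon\xi\cdot(x-x_0)=0\}$ meets $\bar\Omega$ only at $x_0$; this can be arranged for a generic $x_0$ on a Lipschitz boundary. For any holomorphic $\phi$ on $\C\setminus\{0\}$, the ansatz
\[
u(x)\coloneqq\mathrm{Re}\bigl[\phi\bigl(\xi\cdot(x-x_0)\bigr)v\bigr]
\]
satisfies $\B u=\mathrm{Re}[\phi^{(n)}(\xi\cdot(x-x_0))\B(\xi)v]=0$ on $\Omega\setminus\{x_0\}$, so $\B u$ concentrates at $x_0$ and is harmless as a distribution inside $\Omega$.

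The problem then reduces to selecting $\phi$ with $|\phi(z)|\to\infty$ as $z\to 0$ (forcing $u$ to be unbounded near $x_0$) while $\phi,\dots,\phi^{(n-1)}$ remain locally integrable on $\C$ near the origin; by Fubini over the $(n-2)$ directions transverse to $N$, this yields $u\in\sobo^{n-1,1}(\Omega,V)$, and the concentration of $\B u$ at the single boundary point $x_0$ is a finite Radon measure on $\Omega$, so $u\in\bv^\B(\Omega)$. For $n=2$ the classical choice $\phi(z)=\log z$ realises this directly: $u$ has a pure logarithmic singularity at $x_0$, $|\phi'(z)|=|z|^{-1}$ is integrable on two-dimensional disks, and $\B u$ is a multiple of $\delta_{x_0}$; the first-order setting in \cite[Sec.~4.2]{GR} is analogous with an even simpler ansatz.

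The main obstacle is the case $n\geq 3$, where the single-complex-variable profile is genuinely too tight: iterated integration shows that local integrability of $\phi^{(n-1)}$ on two-dimensional disks around $0\in\C$ forces $\phi$ itself to be bounded there, so no single holomorphic $\phi$ can satisfy both requirements. This is circumvented by replacing the characteristic ray $\{\lambda\xi\colon\lambda\in\C\}$ by a positive-dimensional subvariety of $\{\zeta\in\C^n\setminus\{0\}\colon\ker_\C\B(\zeta)\neq\{0\}\}$, which is nontrivial precisely because $\C$-ellipticity fails, and superposing the corresponding elementary plane-wave solutions so as to spread the loss of integrability across several characteristic directions; the resulting $u$ retains unboundedness at $x_0$, still satisfies $u\in\sobo^{n-1,1}(\Omega,V)$ with $\B u$ a finite Radon measure, and therefore furnishes the required contradiction.
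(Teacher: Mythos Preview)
Your approach for $n\leq 2$ is essentially the paper's: the plane-wave ansatz $u(x)=\mathrm{Re}[f(\xi\cdot(x-x_0))v]$ with $f^{(n-1)}(z)=z^{-1}$, the singular set placed just outside $\Omega$ at a boundary point. Two minor repairs are needed there. First, $\log z$ is not single-valued on $\C\setminus\{0\}$; you must fix a branch cut and place \emph{it} (an $(n-1)$-dimensional half-hyperplane, not just the $(n-2)$-plane $N$) outside $\Omega$ near $x_0$, as the paper does. Second, your requirement that $N$ meet $\bar\Omega$ only at $x_0$ is unachievable for generic Lipschitz $\Omega$ when $n\geq 3$; only the weaker branch-cut condition is needed. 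Also, since $x_0\in\partial\Omega$, $\B u$ is simply $0$ as a distribution on $\Omega$, not ``a multiple of $\delta_{x_0}$''.

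Your diagnosis of the obstruction for $n\geq 3$ is sharp and in fact catches a point the paper's own argument glosses over: a function of two real variables with $(n-1)$-st derivatives in $\lebe^1$ lies in $\sobo^{n-1,1}_{\locc}(\R^2)\hookrightarrow\hold$, so with $f^{(n-1)}=z^{-1}$ one gets $f(z)\sim z^{n-2}\log z\to 0$ and the constructed $u$ extends continuously to $x_0$. However, your proposed remedy does not close the gap. The characteristic set $\{\zeta\in\C^n\setminus\{0\}:\ker_\C\B(\zeta)\neq 0\}$ can consist of a single conjugate pair of complex lines: e.g.\ on $\R^3$ with $V=\R$, take the symbol components $\xi_1(\xi_1^2+\xi_2^2)$, $\xi_2(\xi_1^2+\xi_2^2)$, $\xi_3(\xi_1^2+\xi_2^2)$, $\xi_1^2\xi_3$, $\xi_3^3$; this is real-elliptic with complex characteristic set $\{\xi_3=0,\ \xi_1^2+\xi_2^2=0\}$. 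Superposing over $\C\xi_0$ only reparametrises $z=\xi_0\cdot x$, and adding $\C\bar\xi_0$ just recovers the real part; you are still manufacturing a function of two real variables, hence continuous by the very embedding you identified. So your sketch does not produce a discontinuous $u\in\bv^\B(\Omega)$ in general, and a different mechanism (or an indirect route through the failure of the $\lebe^\infty$-estimate in \cite{GR}) is needed for $n\geq 3$.
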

	\begin{proof}
		We only prove the claim under the assumption that $\B$ is elliptic. Necessity of ellipticity follow by a simplification of the arguments to follow.
		
		If $\B$ is elliptic but not $\C$-elliptic, there exist non-zero complex vectors $\xi,\,v$ such that $\B(\xi)v=0$. It is shown in \cite[Prop.~3.1]{GR} that for the complex valued maps $u(x)=f((x-x_0)\cdot\xi)v$, $x\in\Omega$, we have that $\B u=0$ at all points where $f$ is holomorphic. 
		Moreover, it was shown that $D^lu(x)=(\partial^l_1f)(x\cdot\xi)v\otimes \xi^{\otimes l}$. After a change of variable, we can assume that $D^lu(x)=(\partial^l_1f)(x_1+\imag x_2)V_l$, where $V_l=v\otimes \xi^{\otimes l}$. In particular, if $Q=[-R,R]^n$,
		\begin{align}\label{eq:integral}
			\int_Q |D^lu(x)|\dif x=|V_l|\int_{[-R,R]^2}|(\partial^l_1f)(x_1+\imag x_2)|\dif\mathscr{L}^2(x_1,x_2).
		\end{align}
		
		We choose $x_0\in\partial\Omega$ such that the $(n-1)$-dimensional half-space $x_0+\{x\in\R^n\colon x_2\leq 0\}$ does not intersect $\Omega$ in a small ball $B_{r_0}(x_0)$. We choose $f$ such that $\partial^{n-1}f(z)=z^{-1}$ in $\C\setminus\imag(-\infty,0]$. Such $f$ exists by standard results of complex analysis. It then follows from \eqref{eq:integral} that $D^{n-1}u$ is integrable in $ B_{r_0}(x_0)\cap\Omega$. Using \cite[Sec.~1.1.11]{Mazya}, we obtain that $u\in\sobo^{n-1,1}(\Omega\cap B_{r_0}(x_0),V)$. Taking a cut-off $\rho\in\hold^\infty_c(B_{r_0}(x_0))$ such that $\rho=1$ in $B_{r_0/2}(x_0)$, we conclude that by use of the product rule that $\tilde{u}\coloneqq \rho u\in\bv^\B(\Omega)$, but $\tilde u$ is not continuous at $x_0$.
	\end{proof}
	We can now conclude the proof of Theorem~\ref{thm:main2}, and with it, the present paper.
	\begin{proof}[Proof of Theorem~\ref{thm:main2}]
		We already proved in Proposition~\ref{prop:suff_C-ell} and Lemma~\ref{lem:nec_C-ell} that $\C$-ellipticity of $\B$ is equivalent with $\bv^\B(Q)\subset\hold(\bar Q,V)$. In particular, the restriction map $\tr u=u\restriction_{\partial Q}$ is a well-defined linear trace operator on $\bv^\B(Q)$. To see that $\tr$ is bounded, we use Proposition~\ref{prop:GR} and estimate
		\begin{align*}
			\|\tr u\|_{\lebe^\infty(\partial Q)}\leq \| u\|_{\lebe^\infty(Q)}\leq c\|u\|_{\bv^\B(\Omega)}.
		\end{align*}
		The proof is complete.
	\end{proof}

\end{document}